
\documentclass[a4paper,reqno]{amsart}

\usepackage{lmodern}
\usepackage[T1]{fontenc}
\usepackage[utf8]{inputenc}
\usepackage[english]{babel}
\usepackage{microtype} 

\usepackage{amsmath,amssymb,amsthm,mathrsfs,latexsym,mathtools,mathdots,enumerate,tikz}
\usetikzlibrary{arrows}

\usepackage[all]{xy}
\usepackage[enableskew,vcentermath]{youngtab}

\graphicspath{ {./}{./figures/} }
\ifpdf
	\usepackage{epstopdf}		
	\DeclareGraphicsExtensions{.png,.jpg,.eps,.epsf,.pdf}
\fi

\newtheorem{theorem}{Theorem}
\newtheorem{proposition}[theorem]{Proposition}
\newtheorem{lemma}[theorem]{Lemma}
\newtheorem{claim}[theorem]{Claim}
\newtheorem{definition}[theorem]{Definition}
\newtheorem{corollary}[theorem]{Corollary}
\newtheorem{example}[theorem]{Example}
\newtheorem{remark}[theorem]{Remark}
\newtheorem{conjecture}[theorem]{Conjecture}

\newcommand{\defin}[1]{{\emph{#1}}}


\newcommand{\GT}{\mathbf{GT}}

\newcommand{\xvec}{\mathbf{x}}
\newcommand{\vvec}{\mathbf{v}}

\DeclareMathOperator*{\lex}{lex}

\newcommand{\lambdavec}{{\lambda}}
\newcommand{\muvec}{{\mu}}
\newcommand{\nuvec}{{\nu}}

\newcommand{\dominate}{\trianglerighteq}
\newcommand{\lexless}{<_{\lex}}
\newcommand{\lexleq}{\leq_{\lex}}


\newcommand{\schurS}{S} 

\DeclareMathOperator*{\sign}{sgn}

\tolerance=500

\begin{document}
\title{Combinatorial proof of the skew K-saturation theorem}

\author[P.~Alexandersson]{Per Alexandersson}

\address{Institut für Mathematik,
Universität Zürich,
Winterthurerstrasse 190,
CH-8057 Zürich}
\email{per.alexandersson@math.uzh.ch, per.w.alexandersson@gmail.com}

\keywords{Young tableaux, skew Kostka coefficients, K-saturation conjecture, stretching}

\begin{abstract}

We give a combinatorial proof of the skew version of the K-saturation theorem.
More precisely, for any positive integer $k$,
we give an explicit injection from the set of skew semistandard Young tableaux
with skew shape $k\lambdavec/k\muvec$ and type $k\nuvec$ to the set of skew semistandard Young tableaux
of shape $\lambdavec/\muvec$ and type $\nuvec$.

Based on this method, we pose some natural conjectural refinements on related problems.
\end{abstract}

\maketitle

\section{Introduction}
We will assume that the reader is familiar with the notions of semistandard
Young tableaux and related concepts. For a good reference in this field,
see for example \cite{Stanley2001,Macdonald79symmetric}.
\medskip

The numbers $K_{\lambdavec/\muvec,\nuvec}$ are called the skew Kostka numbers
(or coefficients), and counts the number of semistandard Young tableaux with
skew shape $\lambdavec/\muvec$ and type $\nuvec$. These numbers appear in
various branches of mathematics, for example in representation theory, in the
study of symmetric functions and in algebraic geometry. The skew Kostka
coefficients expresses the skew Schur functions in the basis of the
monomial symmetric polynomials,
\[
 \schurS_{\lambdavec/\muvec}(\xvec) = \sum_{\nuvec \text{ is a partition of } |\lambdavec|-|\muvec|} K_{\lambdavec/\muvec,\nuvec} m_\nuvec.
\]
where $|\lambdavec|$ denotes the sum of all parts in the partition.
Here, $\xvec^{\nuvec}$ denotes $x_1^{\nu_1}\dotsm x_n^{\nu_n}$.
\medskip 

The main result of this paper is a combinatorial proof of the following theorem:
\begin{theorem}\label{thm:mainthm}
For fixed partitions $\lambdavec$, $\muvec$ and $\nuvec,$ the following holds
for any integer $k \geq 1\colon$
\begin{align}\label{eqn:ksaturation}
K_{k\lambdavec/k\muvec,k\nuvec} >0 \Leftrightarrow K_{\lambdavec/\muvec,\nuvec} >0.
\end{align}
\end{theorem}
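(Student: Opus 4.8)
The plan is to prove the two implications in \eqref{eqn:ksaturation} separately. The forward implication $K_{\lambda/\mu,\nu}>0 \Rightarrow K_{k\lambda/k\mu,k\nu}>0$ is elementary: given a semistandard tableau $T$ of shape $\lambda/\mu$ and type $\nu$, I would apply the horizontal $k$-fold stretch, replacing the cell in row $i$ and column $c$ by $k$ consecutive cells in row $i$ carrying the same entry, so that the original column $c$ is blown up to the columns $k(c-1)+1,\dots,kc$. Row $i$ then occupies columns $k\mu_i+1,\dots,k\lambda_i$, which is exactly the shape $k\lambda/k\mu$, and every entry is repeated $k$ times, giving type $k\nu$. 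The one thing to verify is semistandardness: a column $c'$ of the stretched tableau reads off precisely column $\lceil c'/k\rceil$ of $T$ (the same set of rows with the same entries), so strict increase down columns and weak increase along rows are inherited from $T$. This map is visibly injective, so in fact $K_{\lambda/\mu,\nu}\le K_{k\lambda/k\mu,k\nu}$.

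For the reverse implication $K_{k\lambda/k\mu,k\nu}>0 \Rightarrow K_{\lambda/\mu,\nu}>0$, which carries the real content, I would pass to chains of partitions. A tableau of shape $k\lambda/k\mu$ and type $k\nu$ is the same datum as a chain $k\mu=\kappa^{(0)}\subseteq\kappa^{(1)}\subseteq\cdots\subseteq\kappa^{(n)}=k\lambda$ in which each $\kappa^{(t)}/\kappa^{(t-1)}$ is a horizontal strip with $|\kappa^{(t)}/\kappa^{(t-1)}|=k\nu_t$, and I want to produce a chain $\mu=\gamma^{(0)}\subseteq\cdots\subseteq\gamma^{(n)}=\lambda$ of horizontal strips with $|\gamma^{(t)}/\gamma^{(t-1)}|=\nu_t$. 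The construction I would use is a \emph{rounding}
\[
 \gamma^{(t)}_j \;=\; \left\lfloor \tfrac{\kappa^{(t)}_j+c_j}{k}\right\rfloor ,
\]
governed by a single weakly decreasing offset vector $c_1\ge c_2\ge\cdots\ge 0$ with every $c_j\in\{0,\dots,k-1\}$. The first observation is that this is structurally robust for \emph{every} such $c$: weak monotonicity of $c$ together with the partition and interlacing inequalities of the $\kappa^{(t)}$ forces each $\gamma^{(t)}$ to be a partition, forces $\gamma^{(t-1)}\subseteq\gamma^{(t)}$ to be a horizontal strip, and (because $0\le c_j<k$) pins the endpoints to $\gamma^{(0)}=\mu$ and $\gamma^{(n)}=\lambda$. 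Thus for any offset one already lands in a valid skew tableau of shape $\lambda/\mu$; the only thing not yet controlled is the type.

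The main obstacle is precisely the type. Writing $\kappa^{(t)}_j=k\lfloor\kappa^{(t)}_j/k\rfloor+r^{(t)}_j$ with $0\le r^{(t)}_j<k$, a short computation shows that the strip $\gamma^{(t)}/\gamma^{(t-1)}$ has the required size $\nu_t$ at level $t$ if and only if
\[
 \#\{\, j : c_j \ge k - r^{(t)}_j \,\} \;=\; \tfrac1k\textstyle\sum_j r^{(t)}_j .
\]
The endpoints satisfy this automatically, but the intermediate levels do not for an arbitrary offset, and one cannot enforce them level by level independently without destroying the chain. The heart of the argument, and the step I expect to be genuinely delicate, is therefore a \emph{compatibility lemma}: a single weakly decreasing offset $c$ can be chosen so that the displayed identity holds simultaneously at every level $t$. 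I would prove this by exploiting the interlacing of consecutive $\kappa^{(t)}$, which sharply restricts how the remainder vectors $(r^{(t)}_j)_j$ may vary with $t$ — for instance each $|\kappa^{(t)}|$ is a multiple of $k$, so $\sum_j r^{(t)}_j\equiv 0 \pmod k$ — and then run a greedy/exchange argument, morally a combinatorial incarnation of the integrality of the underlying $\GT$ chain polytope, to build $c$ one coordinate at a time. Once the compatibility lemma is in hand, the rounding map sends the nonempty set of stretched tableaux into the set of tableaux of shape $\lambda/\mu$ and type $\nu$, yielding $K_{\lambda/\mu,\nu}>0$ and completing the equivalence.
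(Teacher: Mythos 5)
Your forward direction is correct and is essentially the paper's easy direction (your horizontal $k$-fold stretch is exactly entrywise multiplication of the corresponding GT-pattern by $k$). The reverse direction, however, contains a genuine gap, and the problem is not merely that the ``compatibility lemma'' is deferred to a sketched greedy argument: the lemma is false. Take $k=2$, $\mu=\emptyset$, $\lambda=(2,2,1)$, $\nu=(1,1,1,1,1)$, and the chain
\begin{equation*}
(0,0,0)\subseteq(2,0,0)\subseteq(3,1,0)\subseteq(3,3,0)\subseteq(4,3,1)\subseteq(4,4,2),
\end{equation*}
which is a valid element of $\GT_{2\lambda/2\mu,2\nu}$: every step is a horizontal strip of size $2$. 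At level $t=2$ the remainder vector is $(1,1,0)$, so your size identity requires exactly one of $c_1,c_2$ to equal $1$, and weak monotonicity then forces $(c_1,c_2)=(1,0)$. At level $t=4$ the remainder vector is $(0,1,1)$, so exactly one of $c_2,c_3$ must equal $1$; with $c_2=0$ this forces $c_3=1>c_2$, violating monotonicity. Hence no weakly decreasing offset $c\in\{0,1\}^3$ rounds this pattern to type $\nu$, even though $K_{\lambda/\mu,\nu}=5>0$. Dropping monotonicity does not rescue the construction: for instance $c=(0,1,0)$ fixes the sizes at levels $2$ and $4$ but yields $\gamma^{(3)}=(1,2,0)$, which is not a partition.

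The structural obstruction is precisely the phenomenon the paper's machinery is built to handle: the residues mod $k$ of the entries are not anchored to fixed columns but drift across columns as $t$ varies (in the paper's language, the tiles of non-multiples of $k$ are slanted), so no column-indexed rounding applied directly to an arbitrary element of $\GT_{k\lambda/k\mu,k\nu}$ can succeed. The paper instead first \emph{modifies} the given pattern: it partitions the entries not divisible by $k$ into tiles and snakes, shows that any such pattern admits a simple snake cycle, and that the associated $\pm 1$ move produces another element of $\GT_{k\lambda/k\mu,k\nu}$ that is strictly larger (or smaller) in lexicographic order. Consequently the lex-largest element has all entries divisible by $k$, and only that element is divided by $k$. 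Your argument needs an analogous preprocessing step that moves the pattern to one on which rounding is trivial; as written, the map you propose simply does not exist for all inputs.
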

Here, multiplication of a partition by a scalar is done entrywise and this is
commonly referred to as \emph{stretching}.
\medskip

An indirect proof of this was found by A.~Knutson and T.~Tao in 1998, see \cite{Knutson99thehoneycomb,Buch2000}.
They give a proof of a stronger result, namely
\begin{theorem}\label{thm:csaturation}
For fixed partitions $\lambdavec,\muvec$ and $\nuvec,$ the following holds for any integer $k \geq 1\colon$
\begin{align}\label{eqn:csaturation}
c_{k\lambdavec/k\muvec,k\nuvec} >0 \Leftrightarrow c_{\lambdavec/\muvec,\nuvec} >0
\end{align}
where the $c_{\lambdavec/\muvec,\nuvec}$ denotes the Littlewood-Richardson coefficients.
\end{theorem}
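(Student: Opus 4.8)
The plan is to realize the (skew) Littlewood--Richardson coefficient $c_{\lambdavec/\muvec,\nuvec}$ as the number of lattice points of a rational polytope whose facet inequalities depend linearly and homogeneously on the boundary data $(\lambdavec,\muvec,\nuvec)$, and then to exploit the special geometry of that polytope. Concretely I would use the hive model: $c_{\lambdavec/\muvec,\nuvec}$ equals the number of integer hives --- triangular arrays of real numbers satisfying the rhombus inequalities --- with boundary determined by the triple $(\lambdavec,\muvec,\nuvec)$. Writing $P(\lambdavec,\muvec,\nuvec)$ for this hive polytope, the defining constraints are homogeneous of degree one in the boundary, so stretching the partitions merely dilates it: $P(k\lambdavec,k\muvec,k\nuvec)=k\,P(\lambdavec,\muvec,\nuvec)$. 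In particular $c_{k\lambdavec/k\muvec,k\nuvec}$ counts the integer points of $k\,P(\lambdavec,\muvec,\nuvec)$.

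The forward implication is the easy one: from a single integer hive $H\in P(\lambdavec,\muvec,\nuvec)$ the dilate $kH$ is an integer hive in $k\,P(\lambdavec,\muvec,\nuvec)$, so $c_{\lambdavec/\muvec,\nuvec}>0$ forces $c_{k\lambdavec/k\muvec,k\nuvec}>0$; one may even note the monotonicity $c_{\lambdavec/\muvec,\nuvec}\le c_{k\lambdavec/k\muvec,k\nuvec}$ coming from Minkowski addition of hives. The substance of the theorem is the reverse implication, and the polytopal picture reduces it to a clean statement. If $c_{k\lambdavec/k\muvec,k\nuvec}>0$ then $k\,P$ contains a lattice point $p$, whence $p/k\in P$ and $P$ is nonempty; the task is to upgrade this rational point to an actual integer hive. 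Thus everything reduces to the saturation property of hive polytopes: whenever $P(\lambdavec,\muvec,\nuvec)$ is nonempty and its boundary is integral, it contains a lattice point.

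To establish this property I would pass to the dual honeycomb model. A nonempty honeycomb polytope with integral boundary has a vertex, that is, a rigid honeycomb, and the crucial input is the rigidity lemma of Knutson--Tao: a rigid honeycomb whose boundary rays sit at integral positions is itself integral. Granting this, a vertex of the nonempty polytope $P(\lambdavec,\muvec,\nuvec)$ is an integral honeycomb, equivalently an integer hive, which gives $c_{\lambdavec/\muvec,\nuvec}>0$ and finishes the reverse implication.

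The hard part is precisely this rigidity-implies-integrality lemma. Its mechanism is clean at trivalent vertices: there the three incident edges, running in the three fixed honeycomb directions, meet in a single point exactly when their line-coordinates satisfy a balancing relation with $\pm 1$ coefficients, so integrality propagates inward from the integral boundary and a honeycomb all of whose interior vertices are trivalent must be integral. The genuine difficulty is the degenerate honeycombs, where several edges collapse and interior vertices become multivalent; a rigid honeycomb need not be trivalent. I would treat these either by perturbing the boundary into generic position, proving integrality of the resulting simply degenerate rigid honeycombs, and then descending to the original integral boundary by a limiting argument, or by arguing directly that any non-integral feature of a rigid honeycomb could be slid against its neighbors while preserving the boundary, contradicting rigidity. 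Controlling exactly how edges collide and how the degrees of freedom propagate through these degenerate vertices is the delicate combinatorial point, and it is the step I expect to be the main obstacle.
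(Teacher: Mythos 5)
The paper never proves this theorem itself: it is quoted from Knutson--Tao, with Buch's hive-theoretic exposition cited, and the paper's own machinery (snakes in GT-patterns) is only used to prove the weaker skew Kostka statement of Theorem \ref{thm:mainthm}. Your sketch therefore follows the route of the paper's cited source rather than anything internal to the paper, and much of it is on target: the homogeneity/dilation identity $P(k\lambdavec,k\muvec,k\nuvec)=k\,P(\lambdavec,\muvec,\nuvec)$, the easy forward direction, and the reduction of the converse to ``a nonempty hive polytope with integral boundary contains a lattice point'' are exactly the Knutson--Tao/Buch setup.

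However, there is a genuine gap, and it is not only the one you flag. Your pivotal lemma --- that a vertex of the nonempty polytope is automatically integral because ``a rigid honeycomb whose boundary rays sit at integral positions is itself integral'' --- is false as stated. Hive (equivalently honeycomb) polytopes with integral boundary can have non-integral vertices; explicit examples were found by De Loera and McAllister, and this is precisely the ``dilation effect'' alluded to in the paper's final section: these are rational, non-integral polytopes whose Ehrhart functions are nevertheless polynomials. Since the vertices of the honeycomb polytope are exactly the rigid honeycombs, rigidity plus integral boundary does \emph{not} force integrality, and the multivalent degenerate vertices you describe as ``the main obstacle'' are exactly where integrality propagation genuinely fails rather than a technicality to be smoothed over. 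Knutson and Tao accordingly do not take an arbitrary vertex: they single out one specific extremal honeycomb (the largest lift; in Buch's version a hive maximizing the sum of its entries), and the entire technical content of the proof is showing that \emph{this} maximizer can be chosen integral, via the analysis of flat rhombi and degenerate edges. Your fallback strategies do not obviously repair this: perturbing the boundary into generic position abandons the given integral boundary data, and a limit of integral honeycombs with varying boundaries need not be an integral honeycomb with the original boundary. So the proposal correctly reduces the theorem to the known hard step, but then replaces that step with a stronger claim that is actually false, and the argument as written does not go through.
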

There is a parametrization of the Littewood-Richardson coefficients which gives
the skew Kostka coefficients. Using this, \eqref{eqn:csaturation} therefore
implies \eqref{eqn:ksaturation}.
The explicit conversion from skew Kostka coefficients to Littewood-Richardson coefficients
can be obtained from \cite[p. 338]{Stanley2001}.
In short, $K_{\lambdavec/\muvec,\nuvec} = \langle h_\nu, \schurS_{\lambdavec/\muvec} \rangle$,
and the latter can be expressed as a Littewood-Richardson coefficient,
using the calculations in \cite[p. 339]{Stanley2001}.
Note that if $K_{\lambdavec/\muvec,\nuvec} $ is expressed as $c_{\lambdavec'/\muvec',\nuvec'}$ for some $\lambdavec'$, 
$\muvec'$, $\nuvec'$ then  $K_{k\lambdavec/k\muvec,k\nuvec}  = c_{k\lambdavec'/k\muvec',k\nuvec'}$ for all natural numbers $k$,
that is, \emph{the correspondence commutes with stretching}. 
This shows that Theorem \ref{thm:mainthm} follows from Theorem \ref{thm:csaturation}.

\medskip

A corollary of \eqref{eqn:ksaturation} is the following statement,
(known as Fulton's K-saturation theorem, see \cite{King04stretched}),
where only non-skew tableaux are considered:
\begin{align}\label{eqn:k2saturation}
K_{k\lambdavec,k\nuvec} >0 \Leftrightarrow K_{\lambdavec,\nuvec} >0 \text{ for each integer } k\geq 1.
\end{align}
This statement can be proved directly by using the fact that
$K_{\lambdavec,\nuvec} >0$ if and only of $\lambdavec \dominate \nuvec$,
together with the fact that $k\lambdavec \dominate k\nuvec$ if and only if $\lambdavec \dominate \nuvec$.
Here, $\dominate$ denotes the \emph{domination order}.

Theorem \eqref{eqn:ksaturation} therefore lies somewhere in
between \eqref{eqn:csaturation} and \eqref{eqn:k2saturation} in difficulty.
The fact that there is no simple method to determine whenever $K_{\lambdavec/\muvec,\nuvec}$ is non-zero
strongly indicates that proving \eqref{eqn:ksaturation} is non-trivial.

The original proof of \eqref{eqn:csaturation} is rather intricate,
and more recent proofs, \cite{Buch2000} still require some technical arguments
(non-trivial bijection to hives and minimization arguments).
This motivates a more direct proof of Theorem \ref{thm:mainthm} which is presented here.

We actually prove a slightly stronger statement. The numbers $K_{k\lambdavec/k\muvec,k\nuvec}$
can be interpreted as counting certain combinatorial objects called \defin{Gelfand-Tsetlin patterns},
which are in bijection with skew semistandard Young tableaux.
We prove that that the lexicographically largest (and smallest) such Gelfand-Tsetlin pattern
is in a natural correspondence with a Gelfand-Tseltin pattern counted by $K_{\lambdavec/\muvec,\nuvec}$.

Another motivation of this paper is that the methods used may be used to give
some insight in some conjectures related to certain polynomials obtained from stretching.
It has recently been showed in \cite{Derksen2002,Rassart2004}
that $k \mapsto c_{k\lambdavec/k\muvec,k\nuvec}$ is polynomial for all fixed
$\lambdavec/\muvec$ and $\nuvec$, which therefore implies polynomiality of
stretched skew Kostka coefficients.

In \cite{King04stretched}, King, Tollu and Toumazet gave the following conjecture:
\begin{conjecture}
For any fixed $\lambdavec/\muvec$ and $\nuvec$,
the polynomial $P(k) = c_{k\lambdavec/k\muvec,k\nuvec}$ has non-negative coefficients.
\end{conjecture}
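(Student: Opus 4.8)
The plan is to reinterpret $P(k)=c_{k\lambdavec/k\muvec,k\nuvec}$ as an Ehrhart function and then to prove \emph{Ehrhart positivity} for the relevant family of polytopes. Fix $\lambdavec,\muvec,\nuvec$ and let $\mathcal P=\mathcal P(\lambdavec,\muvec,\nuvec)\subseteq\R^N$ be the rational polytope whose lattice points are the hives (equivalently, via the bijections used above, the Gelfand--Tsetlin patterns) counted by $c_{\lambdavec/\muvec,\nuvec}$. The facet inequalities defining $\mathcal P$ have integer coefficients with right-hand sides that scale linearly in $k$, so the lattice points of the $k$-th dilate $k\mathcal P$ are exactly those counted by $c_{k\lambdavec/k\muvec,k\nuvec}$; hence $P(k)=\#(k\mathcal P\cap\Z^N)$ is the Ehrhart function of $\mathcal P$. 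The polynomiality established in \cite{Derksen2002,Rassart2004} says this Ehrhart function is a genuine polynomial of degree $d=\dim\mathcal P$; writing $P(k)=\sum_{i=0}^d e_i k^i$, the conjecture becomes precisely the assertion $e_i\ge 0$ for all $i$.

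First I would record the coefficients that are automatically non-negative. At $k=0$ the only object is the empty tableau, so $e_0=P(0)=1$; the leading coefficient $e_d=\mathrm{vol}(\mathcal P)\ge 0$ is the volume; and $e_{d-1}$ equals half the relative surface area, hence is also non-negative. Thus the content of the conjecture is the non-negativity of the \emph{intermediate} coefficients $e_1,\dots,e_{d-2}$, which for a general lattice polytope can be negative, and the task is to show that the special geometry of hive polytopes forbids this. I would stress at the outset that merely invoking Stanley's non-negativity of the $h^*$-vector does \emph{not} suffice, because passing from the $h^*$-expansion to the monomial coefficients $e_i$ introduces signed Stirling numbers and destroys manifest positivity.

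The main strategy would then be to use the local (McMullen--Danilov) expansion of the Ehrhart polynomial as a sum over faces,
\[
 e_i=\sum_{\substack{F\preceq\mathcal P\\ \dim F=i}}\mu(F)\,\mathrm{nvol}(F),
\]
where $\mathrm{nvol}(F)\ge 0$ is the relative volume of the face $F$ and $\mu(F)$ is a rational number depending only on the normal cone of $\mathcal P$ along $F$. Since the volumes are non-negative, it suffices to prove that every local coefficient $\mu(F)$ is non-negative. Here the explicit combinatorial description of $\mathcal P$ should be decisive: the normal cone along $F$ is generated by the gradients of the hive inequalities active on $F$, and one would try to show these cones are of a sufficiently special type (smoothness of the normal fan being a natural candidate condition) to force $\mu(F)\ge 0$. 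An alternative, and arguably more in keeping with the combinatorial spirit of this paper, would be to bypass the geometry altogether and exhibit for each $i$ a finite set of combinatorial objects — sub-patterns or ``flags'' inside the Gelfand--Tsetlin patterns, built from the lexicographic extremal patterns considered above — whose cardinality is exactly $e_i$, thereby displaying each coefficient as a manifest count.

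The step I expect to be the genuine obstacle is precisely the non-negativity of the local terms $\mu(F)$ (equivalently, the discovery of the combinatorial model for $e_i$). These quantities encode Todd-class and solid-angle data of the normal cones and are exactly what can turn negative for arbitrary polytopes, so controlling them demands a far more precise understanding of the facet structure of the hive polytope than the positivity results proved earlier. Indeed, the saturation theorem established here only guarantees that the \emph{values} $P(k)$ are positive and gives no direct handle on the individual coefficients; a substantial new structural input about the normal fan of $\mathcal P$, or a shelling and decomposition compatible with the injection constructed in this paper, will be needed to finish the argument.
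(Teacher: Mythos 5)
The statement you were asked about is not a theorem of this paper at all: it is recorded there as an open conjecture of King, Tollu and Toumazet (cited from \cite{King04stretched}), and the paper offers no proof of it — its contribution in this direction is only a further conjectural refinement (positivity of the functions $p_G(k)$). So there is no ``paper proof'' to match, and the relevant question is whether your proposal actually closes the problem. It does not, and you say so yourself: after the correct reductions (identifying $P(k)$ with the Ehrhart function of the hive polytope $\mathcal{P}$, invoking polynomiality from \cite{Derksen2002,Rassart2004}, and correctly observing that $h^*$-non-negativity does not transfer to the monomial coefficients $e_i$), the entire burden lands on the non-negativity of the local coefficients $\mu(F)$ in the McMullen--Danilov face expansion, and that step is left as a hope rather than an argument. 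That is precisely where the conjecture lives: Ehrhart positivity fails for general lattice polytopes, the local terms $\mu(F)$ are not even canonically defined (McMullen's formula requires a choice of valuation, e.g.\ a Berline--Vergne-type construction, before ``$\mu(F)\ge 0$ for all $F$'' is a well-posed claim), and your candidate sufficient condition — smoothness of the normal fan — is doubly unavailable: hive polytopes are not smooth in general, and smoothness is not known to force positivity of the local terms in any case.

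Two smaller cautions on the parts you marked as ``automatic.'' The polytope $\mathcal{P}$ here is rational, not integral, so $e_0=P(0)=1$ and the identification of $e_{d-1}$ with half the relative surface area are facts about \emph{integral} Ehrhart polynomials; they do carry over once one knows the quasi-polynomial collapses to a single polynomial constituent, but that deduction should be made explicit rather than borrowed from the integral case. More substantively, your alternative suggestion — a direct combinatorial model realizing each $e_i$ as a count of sub-patterns built from the lexicographic extremal GT-patterns — is exactly the kind of refinement the paper gestures at with the $p_G(k)$ decomposition $K_{k\lambda/k\mu,k\nu}=\sum_G |\psi_k^{-1}(G)|$ in its final section, but for the Littlewood--Richardson case $c_{k\lambda/k\mu,k\nu}$ no such model is constructed either there or in your proposal. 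In short: your plan is a reasonable research program and correctly diagnoses its own obstruction, but it is not a proof, and the statement remains open.
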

A special case of this conjecture the following:
\begin{conjecture}
For any fixed $\lambdavec/\muvec$ and $\nuvec$,
the polynomial $P(k) = K_{k\lambdavec/k\muvec,k\nuvec}$ has non-negative coefficients.
\end{conjecture}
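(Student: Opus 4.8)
The plan is to recast $P(k)$ as an Ehrhart polynomial and then to attack the resulting \emph{Ehrhart positivity} problem using the special geometry of Gelfand--Tsetlin patterns. Concretely, the patterns counted by $K_{k\lambdavec/k\muvec,k\nuvec}$ are precisely the integer points of the dilate $kQ$ of a single rational polytope $Q=Q(\lambdavec/\muvec,\nuvec)\subset\R^d$ cut out by the interlacing inequalities together with the linear boundary and row-sum (type) constraints; stretching the three partitions by $k$ scales all the inhomogeneous terms by $k$, which is exactly the $k$-fold dilation of $Q$. Since Gelfand--Tsetlin polytopes with integral boundary data are lattice polytopes, $P(k)=|kQ\cap\Z^d|=L_Q(k)$ is the Ehrhart polynomial of $Q$ (this reproves the polynomiality quoted above). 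The conjecture is thus equivalent to the assertion that $Q$ is Ehrhart positive, i.e.\ that $L_Q(k)$ has nonnegative coefficients in the monomial basis $1,k,\dots,k^d$.

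First I would dispose of the tempting but insufficient route through Stanley's nonnegativity theorem. Writing the Ehrhart series as $\sum_{k\ge 0}P(k)t^k=h^*(t)/(1-t)^{d+1}$, the $h^*$-vector has nonnegative entries, equivalently $P(k)=\sum_{i}h^*_i\binom{k+d-i}{d}$ with $h^*_i\ge 0$. This gives positivity in the binomial basis but \emph{not} in the monomial basis, because the polynomials $\binom{k+d-i}{d}$ themselves have sign-alternating coefficients. Establishing this expansion is routine; the point is that it cannot close the argument, and the genuine content of the conjecture lives exactly in the gap between these two bases.

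The structural route I would actually pursue exploits the iterated fibration of the Gelfand--Tsetlin polytope: forgetting the bottom row of a pattern projects $Q$ onto a smaller Gelfand--Tsetlin polytope $Q'$, and the fibers are products of intervals whose edge lengths are affine functions of the coordinates of the base point. This realizes $Q$ as a marked order polytope, a class known to consist of lattice polytopes admitting a unimodular triangulation. I would set up an induction on the number of rows, expressing $L_Q(k)$ as a sum over the base $kQ'\cap\Z^d$ of the box Ehrhart polynomials of the fibers, and try to show that the resulting polynomial inherits monomial positivity from the (inductively positive) base together with the manifestly positive box contributions $\prod_j(\ell_j k+1)$.

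The hard part will be controlling the signs introduced when the fiber-counting sum over the base is re-expanded as a polynomial in $k$: even a convolution of positive-coefficient polynomials can acquire negative coefficients once summed against an Euler--Maclaurin correction, and Ehrhart positivity is known to fail for otherwise well-behaved lattice polytopes, so no soft argument can suffice. I would therefore aim to replace the naive summation by McMullen's local formula, which writes each Ehrhart coefficient as a sum over the faces of $Q$ of a relative volume times a local Todd-type weight, and then try to prove that for the order-polytope structure of $Q$ these local weights are nonnegative --- for instance by matching them with the manifestly nonnegative Ehrhart data of the box fibers. Securing this nonnegativity of the local contributions, uniformly across the face lattice of a general skew Gelfand--Tsetlin polytope, is the essential obstacle, and is precisely why the statement is still only conjectured.
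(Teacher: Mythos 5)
The statement you set out to prove is not proved in the paper: it appears there only as a conjecture (a special case of the King--Tollu--Toumazet positivity conjecture for stretched Littlewood--Richardson coefficients), supported by the known polynomiality of $k \mapsto K_{k\lambda/k\mu,k\nu}$ and by computer experiments; the paper's actual contribution here is a further conjectural \emph{refinement} via the functions $p_G(k)$ produced by its lexicographic/snake-move machinery, not a proof. Your proposal is likewise not a proof, and you concede as much: the decisive step --- nonnegativity of the McMullen local weights across the face lattice of the skew Gelfand--Tsetlin polytope --- is left open. So what remains to assess is whether your reduction to that step is sound, and it contains two concrete errors.

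First, the polytope $Q$ you work with is not a lattice polytope. The interlacing inequalities alone define a marked order polytope, which is indeed integral when the boundary rows are integral, but the Kostka polytope additionally imposes the row-sum (type) constraints, and slicing by those hyperplanes produces genuinely non-integral vertices; this is precisely the phenomenon studied in \cite{Loera04}. It is also exactly why the paper stresses that these are \emph{rational} polytopes and that polynomiality of their Ehrhart functions (Kirillov, Rassart) is a nontrivial theorem rather than a formal consequence of Ehrhart theory, which for rational polytopes yields only quasi-polynomials --- so your parenthetical claim to ``reprove'' polynomiality fails, and your appeal to Stanley $h^*$-nonnegativity needs at least the rational-polytope version to be stated correctly. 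Second, and for the same reason, the inductive fibration collapses: once the row sums are fixed, the fiber of the forget-the-bottom-row projection is not a product of intervals but the slice of such a product by the hyperplane fixing the sum of the forgotten row (a hypersimplex-type slice), so the manifestly positive box contributions $\prod_j(\ell_j k+1)$ are unavailable and the unimodular-triangulation property of marked order polytopes does not transfer. Both pillars of your reduction are therefore false as written, the essential positivity statement is untouched, and the problem remains open --- as it does in the paper, whose own (combinatorial, lex-order) route leads to the refinement $p_G(k)$ rather than to a proof.
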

As a consequence of this paper, we get a natural
refinement of the latter conjecture which might give some
insight in the structure of these polynomials.

\section{Proofs}

We prove \eqref{eqn:ksaturation} by working with skew Gelfand-Tsetlin patterns (GT-patterns).
GT-patterns were introduced in \cite{GelfandTsetlin50} and there is a simple bijection
between GT-patterns and semistandard Young tableaux.
Since we work with skew tableaux, we need a slight generalization of GT-patterns
introduced in \cite{Louck2003}, called \defin{skew GT-patterns}.
These objects are in bijection with skew semistandard Young tableaux
and are for our purposes easier to work with than the tableaux themselves.
There is also a bijection to non-intersecting lattice paths,
where an analogous method to our proof can be carried out.

\subsection{A bijection between skew Young tableaux and skew GT-patterns}

A skew Gelfand-Tsetlin pattern consists of non-negative integers
$(x^i_j)_{1\leq i \leq m, 1\leq j \leq n}$ arranged in a parallelogram,
satisfying the conditions $x^{i+1}_{j} \geq x^i_{j}$ and $x^i_{j} \geq x^{i+1}_{j+1}$
for all values of $i$, $j$ where the indexing is defined, as in \eqref{eq:gtpattern}.
The inequality conditions simply states that horizontal rows, down-right diagonals and up-right diagonals are weakly decreasing. 
Hence, each row $\xvec^i$ can be seen as a partition.
\begin{align}\label{eq:gtpattern}
\begin{array}{cccccccccccccc}
x^m_{1} & & x^m_{2} & & \cdots & & \cdots & & x^m_{n} \\
 & \ddots & & \ddots &  & &   & & & \ddots  \\
  &  &   x^2_{1} &  & x^2_{2} & & \cdots & & \cdots & & x^2_{n} \\
   &  &    & x^1_{1} &    & x^1_{2} & & \cdots & & \cdots & & x^1_{n}
\end{array}
\end{align}
Every skew GT-pattern with $m$ rows, top row $\lambdavec$ and bottom row $\muvec$,
define a unique skew semi-standard Young tableau of shape $\lambdavec/\muvec$
and entries in $1,2,\dots,m-1$ in the following way:

Row $\xvec^{i+1}$ and $\xvec^i$ in the GT-pattern can be viewed as two partitions which
defines the skew shape $\xvec^{i+1}/\xvec^i$.
This skew shape indicates which boxes in the corresponding
tableau that have content $i$, for $i=1,2,\dots,m-1$.
We illustrate this by an example, a formal proof of the bijection can be found in \cite{Louck2003}.

\begin{example}
The GT-pattern in \eqref{eq:bijection} corresponds to the skew Young tableau in \eqref{eq:bijection}.
For example, row $5$ and $4$ define a skew shape $(6,4,3,3)/(6,3,3,1)$
and this shape indicates exactly the location of the boxes with content $4$.
\begin{equation}\label{eq:bijection}
\setcounter{MaxMatrixCols}{20}
\begin{matrix}
6 &  & 4 &  & 3 &  & 3 &  &  &  &  \\
  & 6 &  & 3 &  & 3 &  & 1 &  &  &  \\
  &  & 6 &  & 3 &  & 1 &  & 1 &  &  \\
  &  &  & 4 &  & 2 &  & 1 &  & 1 &  \\
  &  &  &  & 3 &  & 2 &  & 1 &  & 0
\end{matrix}
\quad
\longleftrightarrow
\quad
\young(:::122,::24,:33,144)
\end{equation}
\end{example}

When talking about the shape and type of a GT-pattern, we refer to the
properties of the corresponding tableau.
We also let $\GT_{\lambdavec/\muvec,\nuvec}$ denote the set of skew GT-patterns
corresponding to tableaux of shape $\lambdavec/\muvec$ and type $\nuvec$.
\medskip

Remember, $\nuvec = (\nu_1,\nu_2,\dots)$ is the \emph{integer composition} where $\nu_i$
counts the number of boxes with content $i$ in the tableau.
Note that the cardinality of $\GT_{\lambdavec/\muvec,\nuvec}$
is invariant under permutations of the entries in $\nuvec$,
(this is due to the fact that Schur polynomials are indeed symmetric polynomials).
The GT-pattern in \eqref{eq:bijection} belong to $\GT_{\lambdavec/\muvec,\nuvec}$
where $\lambdavec=(6, 4 , 3 , 3)$, $\muvec = (3,2,1)$ and $\nuvec=(3,2,3,2)$.

\subsection{Properties of GT-patterns}

Here are some general facts about skew GT-patterns.
These are immediate consequences of the bijection above.
\begin{lemma}\label{lm:gtproperties}
Let $G \in \GT_{\lambdavec/\muvec,\nuvec}$. Then 
\begin{enumerate}[(a)]
\item The partition in the top row of $G$ is equal to $\lambdavec$.
\item The partition in the bottom row of $G$ is equal to $\muvec$.
\item If $|\xvec^i|$ denotes the sum of the entries in row $i$ in $G$, then $|\xvec^{i+1}|-|\xvec^i| = \nu_i$
for $i=1,2,\dots,m-1$ were $m$ is the number of rows of $G$.
\end{enumerate}
\end{lemma}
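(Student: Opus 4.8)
The plan is to unwind the bijection between skew GT-patterns and skew semistandard Young tableaux recalled above, since each of (a)--(c) is a statement about the tableau $T$ attached to $G$. Fix $G \in \GT_{\lambdavec/\muvec,\nuvec}$ with rows $\xvec^1,\dots,\xvec^m$ read from bottom to top, and let $T$ be its image under the correspondence. The first point I would record is that the defining inequalities $x^{i+1}_j \geq x^i_j$ and $x^i_j \geq x^{i+1}_{j+1}$ force each row $\xvec^i$ to be weakly decreasing, hence a genuine partition, and also force $\xvec^i \subseteq \xvec^{i+1}$. This is exactly what makes each $\xvec^{i+1}/\xvec^i$ a legitimate skew shape, so that the rule producing $T$ is well defined.

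For parts (a) and (b), I would compare two descriptions of the shape of $T$. By construction the outer shape of $T$ is the top row $\xvec^m$ and its inner shape is the bottom row $\xvec^1$, so $T$ has shape $\xvec^m/\xvec^1$. On the other hand, membership $G \in \GT_{\lambdavec/\muvec,\nuvec}$ means precisely that $T$ has shape $\lambdavec/\muvec$. Since a skew shape determines its outer and inner partitions uniquely, comparing the two descriptions yields $\xvec^m = \lambdavec$ and $\xvec^1 = \muvec$, which is the content of (a) and (b).

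For part (c), I would count boxes by content. By the construction of $T$, the boxes carrying the entry $i$ are exactly the cells of the skew shape $\xvec^{i+1}/\xvec^i$, and the number of cells of a skew shape equals the difference of the sizes of its outer and inner partitions, namely $|\xvec^{i+1}| - |\xvec^i|$. By definition of the type, this count equals $\nu_i$, giving $|\xvec^{i+1}| - |\xvec^i| = \nu_i$ for $i = 1,\dots,m-1$. I do not expect a genuine obstacle here: the lemma is, as advertised, a direct translation of the bijection, and the only point requiring a moment's care is the well-definedness observation of the first paragraph, after which (a)--(c) read off immediately. Its role is simply to fix the dictionary between patterns and tableaux for use in the later arguments.
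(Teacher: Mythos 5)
Your argument is correct and is exactly the paper's intended reasoning: the paper states the lemma as an "immediate consequence of the bijection" without further detail, and your proposal simply spells out that reading (outer/inner shape from the top/bottom rows, content-$i$ boxes from the skew shape $\xvec^{i+1}/\xvec^i$). No discrepancy to report.
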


Using Lemma \ref{lm:gtproperties}, it follows that entrywise multiplication by $k$
of a GT-pattern $G \in \GT_{\lambdavec/\muvec,\nuvec}$
gives a GT-patten $kG$ in $\GT_{k\lambdavec/k\muvec,k\nuvec}$.
This implies the easy direction in \eqref{eqn:ksaturation} of Theorem \ref{thm:mainthm}.

As an immediate consequence of Lemma \ref{lm:gtproperties}, we also have the following:
\begin{corollary}\label{cor:sumk}
If $G \in \GT_{k\lambdavec/k\muvec,k\nuvec}$ then the sum of the entries in any row of $G$ is a multiple of $k$.
\end{corollary}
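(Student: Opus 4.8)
The plan is to derive this directly from Lemma~\ref{lm:gtproperties} by a short induction on the row index. Write $\xvec^1,\xvec^2,\dots,\xvec^m$ for the rows of $G$ from bottom to top, and let $|\xvec^i|$ denote the sum of the entries in row $i$. Since $G\in\GT_{k\lambdavec/k\muvec,k\nuvec}$, its bottom row, its top row, and its type are all obtained by stretching the corresponding data for $\lambdavec/\muvec,\nuvec$ by the factor $k$, and this is the only input needed.

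First I would establish the base case. By part~(b) of Lemma~\ref{lm:gtproperties}, the bottom row of $G$ equals $k\muvec$, so $|\xvec^1| = k|\muvec|$, which is manifestly a multiple of $k$. (Equally well one could anchor the induction at the top row and use part~(a), which gives $|\xvec^m| = k|\lambdavec|$; the argument below is symmetric in the two choices.)

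For the inductive step I would invoke part~(c). Since the type of $G$ is $k\nuvec$, its $i$-th entry is $k\nu_i$, and the lemma yields $|\xvec^{i+1}| - |\xvec^i| = k\nu_i$ for each $i=1,\dots,m-1$. Hence any two consecutive row sums differ by a multiple of $k$. Assuming inductively that $|\xvec^i|$ is divisible by $k$, the identity $|\xvec^{i+1}| = |\xvec^i| + k\nu_i$ shows that $|\xvec^{i+1}|$ is divisible by $k$ as well. By induction, every row sum of $G$ is a multiple of $k$.

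There is no real obstacle here: the statement is an immediate bookkeeping consequence of Lemma~\ref{lm:gtproperties}. The only content is that one row sum (the bottom, say) is already a multiple of $k$, and that the telescoping differences $k\nu_i$ preserve this divisibility as one moves up the pattern.
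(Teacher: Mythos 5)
Your proof is correct and is precisely the argument the paper intends: the corollary is stated as an immediate consequence of Lemma~\ref{lm:gtproperties}, and your induction (bottom row sums to $k|\muvec|$ by part~(b), consecutive row sums differ by $k\nu_i$ by part~(c)) is exactly that bookkeeping made explicit. Nothing is missing.
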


\bigskip

\subsection{Tiles and snakes in GT-patterns}

We now closely examine GT-patterns in $\GT_{k\lambdavec/k\muvec,k\nuvec}$.
The goal is to define an operation on such patterns, such that repeated application
of this operation yield a pattern where all entries are multiples of $k$.

Let $x_j^i$ be an entry in a GT-pattern.
The entries \defin{adjacent to} $x_j^i$ are the entries $x_{j}^{i+1}$, $x_{j+1}^{i+1}$,
$x_{j-1}^{i-1}$ and $x_{j}^{i-1}$, that is, the four diagonal-wise closest entries to $x_j^i$.
Of course, at the boundary of the pattern, not all four adjacent entries are present.

The \defin{tiling} of a GT-pattern $G$ is a partition of entries in $G$ into \defin{tiles}.
This partition is defined as the finest partition with the property that
entries in $G$ that are equal and adjacent belong to the same tile.
Tiles that do not contain entries from the top or bottom row are called \defin{free tiles}.
The concept of tiles has been used in other places for other purposes before, see \cite{Loera04}.

Let $T$ be a free tile in $G$.
As a consequence of the inequalities that needs to be satisfied in a GT-pattern,
the following properties hold for $T$, which are easy to verify:
\begin{enumerate}
\item If two entries in a row of $G$ belong to $T$, then all intermediate entries in the row belong to $T$.
\item If $x^{i}_j$ and $ x^{i}_{j+1}$ belong to $T$, then $ x^{i+1}_{j+1}$ and $ x^{i-1}_{j}$ also belong to $T$.
\item $T$ has a unique topmost and lowest entry, that is there is a \emph{unique} $x^{i}_j$ in $T$ that maximizes (minimizes) $i$.
\item If $T$ has $r_i$ entries in row $i$, then $r_{i+1}$ is either $r_i-1$, $r_i$ or $r_i+1$.
\end{enumerate}
Note that the first two properties imply the last two. 
See Figure \ref{fig:tilingExample} for an example of a tiling. The non-shaded tiles are free tiles.
\begin{figure}[ht!]
\centering
\begin{tikzpicture}[scale=0.4]
\draw[black] (5,-1)--(6,-2)--(7,-1)--(6,0)--(5,-1)--cycle;
\draw[black] (18,-10)--(19,-9)--(18,-8)--(19,-7)--(20,-6)--(19,-5)--(20,-4)--(19,-3)--(18,-2)--(17,-1)--(16,0)--(15,-1)--(14,-2)--(15,-3)--(16,-4)--(17,-5)--(16,-6)--(15,-7)--(16,-8)--(17,-9)--(18,-10)--cycle;
\draw[black] (4,-2)--(5,-3)--(6,-2)--(5,-1)--(4,-2)--cycle;
\draw[black] (9,-3)--(8,-4)--(9,-5)--(10,-6)--(11,-7)--(12,-6)--(11,-5)--(12,-4)--(11,-3)--(12,-2)--(11,-1)--(10,-2)--(9,-3)--cycle;
\draw[black] (12,-8)--(13,-9)--(14,-8)--(15,-7)--(16,-6)--(17,-5)--(16,-4)--(15,-3)--(14,-2)--(13,-1)--(12,-2)--(11,-3)--(12,-4)--(11,-5)--(12,-6)--(13,-7)--(12,-8)--cycle;
\draw[black] (5,-3)--(6,-4)--(7,-3)--(6,-2)--(5,-3)--cycle;
\draw[black] (6,-4)--(7,-5)--(8,-4)--(7,-3)--(6,-4)--cycle;
\draw[black] (11,-7)--(10,-6)--(9,-5)--(8,-4)--(7,-5)--(8,-6)--(9,-7)--(10,-8)--(11,-7)--cycle;
\draw[black] (20,-8)--(21,-7)--(22,-6)--(21,-5)--(20,-4)--(19,-5)--(20,-6)--(19,-7)--(18,-8)--(19,-9)--(20,-8)--cycle;
\draw[black] (11,-7)--(12,-8)--(13,-7)--(12,-6)--(11,-7)--cycle;
\draw[black] (12,-8)--(11,-7)--(10,-8)--(11,-9)--(12,-10)--(13,-9)--(12,-8)--cycle;
\filldraw[color=black,fill=lightgray] (12,-10)--(13,-11)--(14,-10)--(13,-9)--(12,-10)--cycle;
\filldraw[color=black,fill=lightgray] (12,0)--(11,1)--(10,0)--(11,-1)--(12,-2)--(13,-1)--(12,0)--cycle;
\filldraw[color=black,fill=lightgray] (6,0)--(5,1)--(4,0)--(3,1)--(2,0)--(3,-1)--(4,-2)--(5,-1)--(6,0)--cycle;
\filldraw[color=black,fill=lightgray] (16,0)--(15,1)--(14,0)--(13,1)--(12,0)--(13,-1)--(14,-2)--(15,-1)--(16,0)--cycle;
\filldraw[color=black,fill=lightgray] (14,-8)--(13,-9)--(14,-10)--(15,-11)--(16,-10)--(17,-11)--(18,-10)--(17,-9)--(16,-8)--(15,-7)--(14,-8)--cycle;
\filldraw[color=black,fill=lightgray] (9,-3)--(10,-2)--(11,-1)--(10,0)--(9,1)--(8,0)--(7,1)--(6,0)--(7,-1)--(6,-2)--(7,-3)--(8,-4)--(9,-3)--cycle;
\filldraw[color=black,fill=lightgray] (21,-7)--(20,-8)--(19,-9)--(18,-10)--(19,-11)--(20,-10)--(21,-11)--(22,-10)--(23,-11)--(24,-10)--(25,-11)--(26,-10)--(25,-9)--(24,-8)--(23,-7)--(22,-6)--(21,-7)--cycle;
\node at (3,0) {$15$};
\node at (5,0) {$15$};
\node at (7,0) {$9$};
\node at (9,0) {$9$};
\node at (11,0) {$6$};
\node at (13,0) {$3$};
\node at (15,0) {$3$};
\node at (4,-1) {$15$};
\node at (6,-1) {$10$};
\node at (8,-1) {$9$};
\node at (10,-1) {$9$};
\node at (12,-1) {$6$};
\node at (14,-1) {$3$};
\node at (16,-1) {$2$};
\node at (5,-2) {$14$};
\node at (7,-2) {$9$};
\node at (9,-2) {$9$};
\node at (11,-2) {$7$};
\node at (13,-2) {$5$};
\node at (15,-2) {$2$};
\node at (17,-2) {$2$};
\node at (6,-3) {$12$};
\node at (8,-3) {$9$};
\node at (10,-3) {$7$};
\node at (12,-3) {$5$};
\node at (14,-3) {$5$};
\node at (16,-3) {$2$};
\node at (18,-3) {$2$};
\node at (7,-4) {$11$};
\node at (9,-4) {$7$};
\node at (11,-4) {$7$};
\node at (13,-4) {$5$};
\node at (15,-4) {$5$};
\node at (17,-4) {$2$};
\node at (19,-4) {$2$};
\node at (8,-5) {$8$};
\node at (10,-5) {$7$};
\node at (12,-5) {$5$};
\node at (14,-5) {$5$};
\node at (16,-5) {$5$};
\node at (18,-5) {$2$};
\node at (20,-5) {$1$};
\node at (9,-6) {$8$};
\node at (11,-6) {$7$};
\node at (13,-6) {$5$};
\node at (15,-6) {$5$};
\node at (17,-6) {$2$};
\node at (19,-6) {$2$};
\node at (21,-6) {$1$};
\node at (10,-7) {$8$};
\node at (12,-7) {$6$};
\node at (14,-7) {$5$};
\node at (16,-7) {$2$};
\node at (18,-7) {$2$};
\node at (20,-7) {$1$};
\node at (22,-7) {$0$};
\node at (11,-8) {$7$};
\node at (13,-8) {$5$};
\node at (15,-8) {$3$};
\node at (17,-8) {$2$};
\node at (19,-8) {$1$};
\node at (21,-8) {$0$};
\node at (23,-8) {$0$};
\node at (12,-9) {$7$};
\node at (14,-9) {$3$};
\node at (16,-9) {$3$};
\node at (18,-9) {$2$};
\node at (20,-9) {$0$};
\node at (22,-9) {$0$};
\node at (24,-9) {$0$};
\node at (13,-10) {$6$};
\node at (15,-10) {$3$};
\node at (17,-10) {$3$};
\node at (19,-10) {$0$};
\node at (21,-10) {$0$};
\node at (23,-10) {$0$};
\node at (25,-10) {$0$};
\end{tikzpicture}
\caption{The partitioning of entries of a GT-pattern into tiles.}\label{fig:tilingExample}
\end{figure}
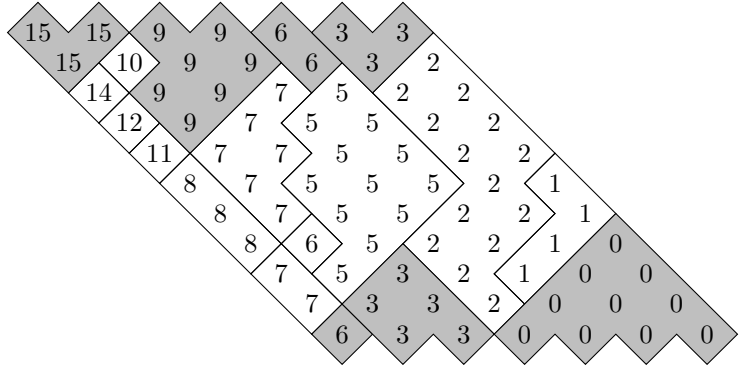
A \defin{snake} is any subset of a tile in $G$ that has at most one entry from each row in $G$ and the entries in this set constitute
a connected component under the adjacency relations defined above.

If $x^i_j$ is the entry in a snake $S$ with maximal $i$, we say that $S$ starts at row $i$.
Similarly, if $x^i_j$ is the entry in a snake $S$ with minimal $i$
then we say that $S$ ends at row $i-1$.

\medskip 

A free tile $T$ partitioned into disjoint snakes, is called a \defin{snake partition} of $T$.
A snake partition is \defin{proper} if at most one snake start or end at each row of $T$.
A proper snake partition of a free tile satisfies the following properties:
\begin{lemma}\label{lem:snakesstartstop}
Let $T$ be a free tile with a proper snake partition and let $r_i$ denote the number of entries in $T$ in row $i$ of $G$.
Then
\begin{itemize}
\item If $r_{i+1}< r_{i}$ then exactly one snake in $T$ starts in row $i$ in $T$.
\item If $r_{i+1}> r_{i}$ then exactly one snake in $T$ ends in row $i-1$ in $T$.
\item If $r_{i+1} = r_{i}$ then row $i$ and $i+1$ in $T$ are identical, that is,
if entry $j$ in row $i$ and entry $j$ in row $i+1$ belong to the same snake.
\end{itemize}
\end{lemma}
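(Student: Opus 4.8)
The plan is to analyze, for each index $i$, how the snakes of the partition behave across the gap between rows $i$ and $i+1$ of $G$, and to read off all three statements from a single count. First I would record the structural facts the argument rests on. Since a tile is a connected component of equal adjacent entries, every entry of $T$ shares one common value, and by the first listed free-tile property the entries of $T$ in any fixed row form a contiguous block; moreover each snake meets a given row in at most one entry. Because the adjacency relation only links entries in consecutive rows, the set of rows in which a fixed snake appears is a contiguous interval, whose topmost entry is where the snake starts and whose bottommost entry sits one row above where the snake ends.

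Next I would set up the crossing count. Fixing $i$, I classify every snake that meets row $i$ or row $i+1$ into three types: those with an entry in both rows; those appearing in row $i$ but not in row $i+1$, which by the interval property are exactly the snakes starting at row $i$; and those appearing in row $i+1$ but not in row $i$, which are exactly the snakes whose lowest entry is in row $i+1$, i.e.\ the snakes ending just below. Writing $s_i$ and $e_i$ for the numbers of the latter two types and $c$ for the number of common snakes, and using that $T$ is the disjoint union of its snakes, I obtain $r_i = s_i + c$ and $r_{i+1} = e_i + c$, hence the single identity $r_i - r_{i+1} = s_i - e_i$.

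The three cases then follow by combining this identity with the free-tile property $|r_{i+1}-r_i|\le 1$ and with properness, which bounds the total number of snakes starting or ending at a given row by one. If $r_{i+1} < r_i$, then $s_i - e_i = 1$, so $s_i \ge 1$, and properness forces $s_i = 1$ and $e_i = 0$, which is the first bullet. The case $r_{i+1} > r_i$ is symmetric and produces the unique ending snake of the second bullet. If $r_{i+1} = r_i$, then $s_i = e_i$; since properness permits at most one start-or-end in total at that row, both counts must vanish, so no snake starts or ends between the two rows. Then every snake meeting one of the rows meets both, and the common snakes put the block of row $i$ in bijection with the block of row $i+1$. I would finish by checking this matching is order-preserving — two snakes cannot reverse order across a single row since their connecting adjacencies forbid crossings — so the rows correspond entry-by-entry, giving the third bullet.

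The crossing identity and the two strict-inequality cases are routine once the bookkeeping is fixed. The step I expect to require the most care is the equality case: here I must use the precise meaning of \emph{proper} to exclude a simultaneous start and end at the same row, a configuration that would preserve $r_i = r_{i+1}$ while destroying the straight-through correspondence, and then invoke the non-crossing property to upgrade the resulting bijection to the entry-by-entry identification asserted in the lemma.
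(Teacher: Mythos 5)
Your proof is correct and takes essentially the same route as the paper's: the paper's treatment of Case~1 is exactly your crossing count (two starts at row $i$ would force an end there as well, contradicting properness, since $s_i-e_i=r_i-r_{i+1}$), and the remaining cases, which the paper dismisses as ``treated in a similar manner,'' are precisely the bookkeeping and non-crossing check you supply for the equality case. One small point of reconciliation: under the paper's convention a snake whose lowest entry lies in row $i+1$ ends at row $i$, so your conclusion in the case $r_{i+1}>r_i$ is the intended content of the second bullet (whose ``row $i-1$'' appears to be an off-by-one slip in the statement rather than a gap in your argument).
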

\begin{proof}
Case 1: The $r_{i+1}$ entries in row $i+1$ belong to different snakes,
and $r_{i} = r_{i+1}+1$ entries in row $i$ belong to different snakes.
Therefore, at least one snake must start in row $i$. If more than one snake
start in this row, there is not enough space for the $r_{i+1}$ snakes in the 
row above to be connected to the $r_i$ entries in row $i$.
This would force at least one snake from row $i+1$ to end in row $i$,
which prevents the snake partition from being proper.

The other two cases are treated in a similar manner.
\end{proof}

\begin{definition}\label{def:snakes}
Let $G\in\GT_{k\lambdavec/k\muvec,k\nuvec}$ and let $T$
be a free tile of $G$ where the entries of $T$ are not divisible by $k$.

Let $R_i$ be the $i$th row of $T$, where $R_1$ has only 
one entry corresponding to the unique lowest entry in $T$,
and let $R_{ij}$ be the $j$th entry in row $i$ of $T$.

For each $i$, partition the set of entries $R_{i1},R_{i2},\dots$ into connected components.
\emph{Note, for some values of $i$ and $j$, there is no entry $R_{ij}$.}
By construction, the connected components have at most one entry from each row,
so this is a partition of the entries in $T$ into disjoint snakes.
This is the \defin{canonical snake partition} of the tile $T$.
\end{definition}
It is clear from this definition that adjacent rows in the canonical snake partition have
one of the following forms, corresponding to the cases in Lemma \ref{lem:snakesstartstop}.
Here, entries are enumerated according to which snake they are members of.
Underlined entries denote topmost or lowest entry of a snake.

\noindent
\textbf{Case 1:}
\begin{equation}
\begin{matrix}
& 1 && 2 && \dots  && l \\
1 && 2 && \dots && l && \underline{l+1} \\
\end{matrix}
\end{equation}
\textbf{Case 2:}
\begin{equation}
\begin{matrix}
1 && 2 && \dots && l && \underline{l+1} \\
& 1 && 2 && \dots  && l \\
\end{matrix}
\end{equation}
\textbf{Case 3:}
\begin{equation}
\begin{matrix}
1 && 2 && \dots  && l \\
& 1 && 2 && \dots  && l \\
\end{matrix}
\qquad
\text{  or  }
\qquad
\begin{matrix}
& 1 && 2 && \dots  && l \\
1 && 2 && \dots  && l \\
\end{matrix}
\end{equation}
From this observation, it is clear that the canonical snake partition
of a free tile is proper.

Partitioning all free tiles in $G$ with entries not divisible by $k$,
gives the \defin{canonical snake partition of $G$}.

Note that an entry in $G$ which is not a multiple of $k$ is a member of some (unique) snake
in the canonical snake partition of $G$.

\medskip

\begin{example}\label{ex:snakes}
An element from $\GT_{k\lambdavec/k\muvec,k\nuvec}$ for $k=3$,
$\lambdavec=(5,5,3,3,2,1,1)$, $\muvec=(2,1,1)$
and $\nuvec=(1, 1, 2, 2, 1, 2, 1, 2, 2, 2)$ is displayed in Figure \ref{fig:snakesExample}.
Snake number $5$ starts at row 9 and ends at row 2.
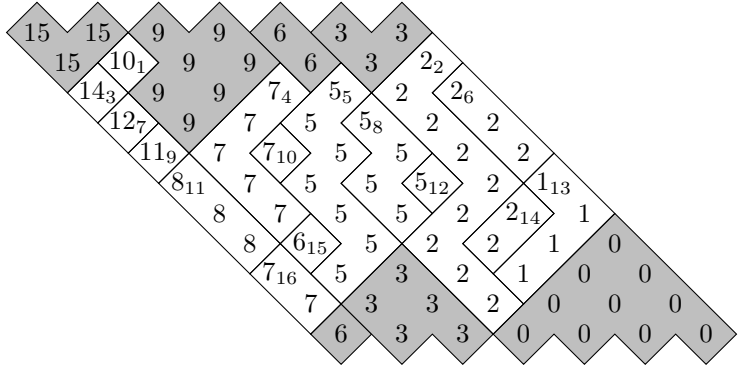
\begin{figure}[ht!]
\centering
\begin{tikzpicture}[scale=0.4]
\filldraw[color=black,fill=lightgray] (12,-10)--(13,-11)--(14,-10)--(13,-9)--(12,-10)--cycle;
\filldraw[color=black,fill=lightgray] (12,0)--(11,1)--(10,0)--(11,-1)--(12,-2)--(13,-1)--(12,0)--cycle;
\filldraw[color=black,fill=lightgray] (6,0)--(5,1)--(4,0)--(3,1)--(2,0)--(3,-1)--(4,-2)--(5,-1)--(6,0)--cycle;
\filldraw[color=black,fill=lightgray] (16,0)--(15,1)--(14,0)--(13,1)--(12,0)--(13,-1)--(14,-2)--(15,-1)--(16,0)--cycle;
\filldraw[color=black,fill=lightgray] (14,-8)--(13,-9)--(14,-10)--(15,-11)--(16,-10)--(17,-11)--(18,-10)--(17,-9)--(16,-8)--(15,-7)--(14,-8)--cycle;
\filldraw[color=black,fill=lightgray] (9,-3)--(10,-2)--(11,-1)--(10,0)--(9,1)--(8,0)--(7,1)--(6,0)--(7,-1)--(6,-2)--(7,-3)--(8,-4)--(9,-3)--cycle;
\filldraw[color=black,fill=lightgray] (21,-7)--(20,-8)--(19,-9)--(18,-10)--(19,-11)--(20,-10)--(21,-11)--(22,-10)--(23,-11)--(24,-10)--(25,-11)--(26,-10)--(25,-9)--(24,-8)--(23,-7)--(22,-6)--(21,-7)--cycle;
%
\draw[black] (5,-1)--(6,-2)--(7,-1)--(6,0)--(5,-1)--cycle;
\draw[black] (18,-4)--(17,-3)--(16,-2)--(17,-1)--(16,0)--(15,-1)--(14,-2)--(15,-3)--(16,-4)--(17,-5)--(16,-6)--(15,-7)--(16,-8)--(17,-9)--(18,-10)--(19,-9)--(18,-8)--(17,-7)--(18,-6)--(19,-5)--(18,-4)--cycle;
\draw[black] (4,-2)--(5,-3)--(6,-2)--(5,-1)--(4,-2)--cycle;
\draw[black] (11,-1)--(10,-2)--(9,-3)--(8,-4)--(9,-5)--(10,-6)--(11,-7)--(12,-6)--(11,-5)--(10,-4)--(11,-3)--(12,-2)--(11,-1)--cycle;
\draw[black] (11,-3)--(12,-4)--(11,-5)--(12,-6)--(13,-7)--(12,-8)--(13,-9)--(14,-8)--(15,-7)--(14,-6)--(13,-5)--(14,-4)--(13,-3)--(14,-2)--(13,-1)--(12,-2)--(11,-3)--cycle;
\draw[black] (20,-4)--(19,-3)--(18,-2)--(17,-1)--(16,-2)--(17,-3)--(18,-4)--(19,-5)--(20,-4)--cycle;
\draw[black] (5,-3)--(6,-4)--(7,-3)--(6,-2)--(5,-3)--cycle;
\draw[black] (15,-7)--(16,-6)--(15,-5)--(16,-4)--(15,-3)--(14,-2)--(13,-3)--(14,-4)--(13,-5)--(14,-6)--(15,-7)--cycle;
\draw[black] (6,-4)--(7,-5)--(8,-4)--(7,-3)--(6,-4)--cycle;
\draw[black] (10,-4)--(11,-5)--(12,-4)--(11,-3)--(10,-4)--cycle;
\draw[black] (11,-7)--(10,-6)--(9,-5)--(8,-4)--(7,-5)--(8,-6)--(9,-7)--(10,-8)--(11,-7)--cycle;
\draw[black] (15,-5)--(16,-6)--(17,-5)--(16,-4)--(15,-5)--cycle;
\draw[black] (20,-8)--(21,-7)--(22,-6)--(21,-5)--(20,-4)--(19,-5)--(20,-6)--(19,-7)--(18,-8)--(19,-9)--(20,-8)--cycle;
\draw[black] (17,-7)--(18,-8)--(19,-7)--(20,-6)--(19,-5)--(18,-6)--(17,-7)--cycle;
\draw[black] (11,-7)--(12,-8)--(13,-7)--(12,-6)--(11,-7)--cycle;
\draw[black] (12,-8)--(11,-7)--(10,-8)--(11,-9)--(12,-10)--(13,-9)--(12,-8)--cycle;
\node at (3,0) {$15$};
\node at (5,0) {$15$};
\node at (7,0) {$9$};
\node at (9,0) {$9$};
\node at (11,0) {$6$};
\node at (13,0) {$3$};
\node at (15,0) {$3$};
\node at (4,-1) {$15$};
\node at (6,-1) {$10_1$};
\node at (8,-1) {$9$};
\node at (10,-1) {$9$};
\node at (12,-1) {$6$};
\node at (14,-1) {$3$};
\node at (16,-1) {$2_2$};
\node at (5,-2) {$14_3$};
\node at (7,-2) {$9$};
\node at (9,-2) {$9$};
\node at (11,-2) {$7_4$};
\node at (13,-2) {$5_5$};
\node at (15,-2) {$2$};
\node at (17,-2) {$2_6$};
\node at (6,-3) {$12_7$};
\node at (8,-3) {$9$};
\node at (10,-3) {$7$};
\node at (12,-3) {$5$};
\node at (14,-3) {$5_8$};
\node at (16,-3) {$2$};
\node at (18,-3) {$2$};
\node at (7,-4) {$11_9$};
\node at (9,-4) {$7$};
\node at (11,-4) {$7_{10}$};
\node at (13,-4) {$5$};
\node at (15,-4) {$5$};
\node at (17,-4) {$2$};
\node at (19,-4) {$2$};
\node at (8,-5) {$8_{11}$};
\node at (10,-5) {$7$};
\node at (12,-5) {$5$};
\node at (14,-5) {$5$};
\node at (16,-5) {$5_{12}$};
\node at (18,-5) {$2$};
\node at (20,-5) {$1_{13}$};
\node at (9,-6) {$8$};
\node at (11,-6) {$7$};
\node at (13,-6) {$5$};
\node at (15,-6) {$5$};
\node at (17,-6) {$2$};
\node at (19,-6) {$2_{14}$};
\node at (21,-6) {$1$};
\node at (10,-7) {$8$};
\node at (12,-7) {$6_{15}$};
\node at (14,-7) {$5$};
\node at (16,-7) {$2$};
\node at (18,-7) {$2$};
\node at (20,-7) {$1$};
\node at (22,-7) {$0$};
\node at (11,-8) {$7_{16}$};
\node at (13,-8) {$5$};
\node at (15,-8) {$3$};
\node at (17,-8) {$2$};
\node at (19,-8) {$1$};
\node at (21,-8) {$0$};
\node at (23,-8) {$0$};
\node at (12,-9) {$7$};
\node at (14,-9) {$3$};
\node at (16,-9) {$3$};
\node at (18,-9) {$2$};
\node at (20,-9) {$0$};
\node at (22,-9) {$0$};
\node at (24,-9) {$0$};
\node at (13,-10) {$6$};
\node at (15,-10) {$3$};
\node at (17,-10) {$3$};
\node at (19,-10) {$0$};
\node at (21,-10) {$0$};
\node at (23,-10) {$0$};
\node at (25,-10) {$0$};
\end{tikzpicture}
\caption{A GT-pattern with the snake partition using Definition \ref{def:snakes}.}\label{fig:snakesExample}
\end{figure}
\end{example}

Notice that in Example \ref{ex:snakes}, there is no row with a single snake entry.
This is true in general: For any row $\xvec$ in $G \in \GT_{k\lambdavec/k\muvec,k\nuvec}$,
the sum of the entries is a multiple of $k$.
Non-snake entries are multiples of $k$, so the snake entries in a row must also sum to a multiple of $k$.
This simple observation implies the following:
\begin{corollary}\label{corr:snakedegree}
If a snake in $G$ starts (ends) at some row $j$, at least one other snake in $G$ starts or ends
at the same row.
\end{corollary}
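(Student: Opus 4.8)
The plan is to reduce the statement to a congruence computation carried out row by row. It suffices to prove that for every row $j$ of $G$ the number of snakes that start or end at row $j$ is never exactly $1$; then, whenever at least one snake starts or ends at row $j$, a second one must also do so, which is precisely the assertion. The structural fact I would feed into the argument is that all entries lying in a single tile are equal: a tile is, by definition, a connected component of equal adjacent entries, so following adjacencies inside a tile never changes the value. In particular every snake $S$ (being a subset of a free tile whose entries are not divisible by $k$) consists of entries all equal to a common value $v_S$ with $v_S\not\equiv 0\pmod{k}$, and, being connected with at most one entry per row, it occupies a contiguous block of rows with exactly one entry in each.

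Next I would set up the bookkeeping. For each row $j$ let $\sigma_j$ denote the sum of the snake entries in row $j$, that is, the sum of the entries of row $j$ that are not divisible by $k$. By Corollary~\ref{cor:sumk} the full sum of row $j$ is divisible by $k$, and all non-snake entries are themselves multiples of $k$; hence $\sigma_j\equiv 0\pmod{k}$ for every $j$. Writing $\sigma_j=\sum_{S} v_S$ over the snakes $S$ whose row-range contains $j$, I would compare consecutive rows. A snake contributes to both $\sigma_j$ and $\sigma_{j+1}$ unless it either starts at row $j$ (its topmost entry is in row $j$, so it contributes to $\sigma_j$ but not $\sigma_{j+1}$) or ends at row $j$ (its lowest entry is in row $j+1$, so it contributes to $\sigma_{j+1}$ but not $\sigma_j$). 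Because each through-snake contributes the same value $v_S$ to both rows, these contributions cancel in the difference, leaving
\[
\sigma_{j+1}-\sigma_j \;=\; \sum_{S \text{ ends at } j} v_S \;-\; \sum_{S \text{ starts at } j} v_S .
\]

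Finally I would conclude by the congruence. Since $\sigma_j,\sigma_{j+1}\equiv 0\pmod{k}$, the displayed difference is $\equiv 0\pmod{k}$. Suppose, for contradiction, that exactly one snake $S_0$ starts or ends at row $j$. Then exactly one of the two sums above is the single term $v_{S_0}$ and the other is empty, so the difference equals $\pm v_{S_0}$; but $v_{S_0}\not\equiv 0\pmod{k}$, contradicting divisibility by $k$. Hence no row has exactly one snake starting or ending at it, which proves the corollary. The conceptual crux, and the step I expect to need the most care, is the cancellation: it works only because a snake carries a single constant value and occupies a contiguous range of rows with one entry per row, so I would make sure to justify both of these from the definitions (and note that the boundary rows, the top and bottom, contain no snake entries, so $\sigma=0$ there and the identity still holds). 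With those in hand the remainder is immediate.
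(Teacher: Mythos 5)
Your proof is correct and takes essentially the same route as the paper: the paper deduces the corollary from the observation (via Corollary~\ref{cor:sumk}) that the non-snake entries are multiples of $k$ and hence the snake entries in each row sum to a multiple of $k$, and your consecutive-row cancellation merely spells out, using that snake entries are constant along a snake and occupy a contiguous run of rows, the step the paper leaves implicit. No gaps.
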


\medskip

\subsection{Snake movements}\label{sec:snakemoves}

Let $T$ be a free tile with a proper snake partition.
A snake $S_1$ is \defin{to the left (right) of} $S_2$ if there is some row of $T$
with an entry of $S_1$ to the left (right) of $S_2$.
A snake $S_1$ is \defin{immediately to the left (right) of} $S_2$ if there is no snake $S_3$
such that $S_3$ is to the right (left) of $S_1$ but to the left (right) of $S_2$.

\begin{lemma}\label{lem:snakemoves}
Let $S_1$ and $S_2$ be two snakes in a proper snake partition such that $S_1$ is immediately to the left (right) of $S_2$.
Consider the action where the snake membership is interchanged between these snakes, for all entries in rows that
contain entries from both snakes. Then the result of this action is still two (connected) snakes,
and the resulting snake partition is still proper.
\end{lemma}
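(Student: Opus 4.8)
The plan is to treat the two snakes as monotone lattice paths and to show that the interchange can only disturb connectivity at the at-most-two rows where the overlap of $S_1$ and $S_2$ abuts a part of a snake lying outside that overlap. First I would record the geometry. By the adjacency relation, moving one row upward along a snake can only keep the column index $j$ fixed or increase it by one, so each snake is a weakly increasing staircase in $j$ occupying a contiguous block of rows; hence the set of rows in which both $S_1$ and $S_2$ appear is an interval $[s,t]$. Two such staircases cannot reverse their horizontal order without sharing a cell, because the difference of their column indices changes by at most one per row and never vanishes; thus the hypothesis that $S_1$ is immediately to the left of $S_2$ forces, in every shared row $i$, that their entries occupy consecutive columns $c_i$ and $c_i+1$. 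If the gap were larger, the intermediate entry, which lies in the tile by the first tile property, would belong to a snake strictly between them, contradicting immediacy.

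Next I would describe the two new snakes explicitly: after the interchange $S_1'$ consists of the entries of $S_1$ in the rows outside $[s,t]$ together with the entries of $S_2$ in the rows of $[s,t]$, and symmetrically for $S_2'$. The decisive bookkeeping point is that $S_1'$ occupies exactly the rows that $S_1$ did, and $S_2'$ exactly the rows of $S_2$. This settles properness at once: the multiset of rows at which some snake starts or ends is unchanged by the action, so the bound of at most one start or end per row is inherited from the original partition. It also reduces the connectivity question to the interior of $[s,t]$, where each new snake is a contiguous sub-path of an old snake and hence connected, and to the one or two rows where $[s,t]$ meets an extension.

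The heart of the argument, and the step I expect to be the main obstacle, is to verify adjacency across these boundary rows. Consider the bottom of the overlap and suppose $S_1$ extends below row $s$ while $S_2$ has its lowest entry in row $s$; I must check that the entry of $S_1$ in row $s-1$ is adjacent to the entry of $S_2$ in row $s$, that is, that $S_1$ does not increase its column as it enters the overlap, so that its row-$(s-1)$ entry $x^{s-1}_{c}$ is the down-left neighbour of $S_2$'s entry $x^{s}_{c+1}$. Here I would invoke Lemma \ref{lem:snakesstartstop} together with the tile properties: the appearance of the bottom of $S_2$ immediately to the right of $S_1$ is a width change of the tile, and the case analysis for a proper partition is meant to pin down where the terminating snake sits and to force the surviving snake $S_1$ to take the column-preserving step that reconnects it to $S_2$'s entry. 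The symmetric discussion at the top of the overlap handles $S_2'$. Establishing this boundary behaviour cleanly is the delicate point, since it genuinely uses the proper structure of the partition rather than monotonicity alone; once it is in place, $S_1'$ and $S_2'$ are each a single connected staircase and the proof is complete.
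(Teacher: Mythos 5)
Your proposal is correct and follows essentially the same route as the paper, which likewise pins down the local geometry of two immediately adjacent snakes (consecutive columns in every shared row, with the non-overlapping tails attaching diagonally at the endpoints of the overlap) and then observes that the swap preserves both connectedness and the rows at which snakes start and end. The boundary step you flag as delicate closes exactly as you expect: the common diagonal neighbour of $S_1$'s and $S_2$'s entries in the row just beyond the overlap lies in the tile by the second tile property, and if it belonged to a third snake, that snake would be forced to have its extremal entry there (both of its continuations into the shared row being occupied by $S_1$ and $S_2$), placing a start of one snake and an end of another in the same row and contradicting properness; hence that neighbour is $S_1$'s next entry and is adjacent to $S_2$'s endpoint, which is precisely what connectivity of the swapped snakes requires.
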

\begin{proof}
The snakes must start (and end) in different rows, since these are snakes from a proper snake partition.
Assume that $S_1$ is to the left of $S_2$ and starts higher up than $S_2$. Consider the row where $S_2$ starts,
where an underlined entry still denotes a topmost or bottommost entry in a snake:
\begin{equation}
\begin{matrix}
& \star \\
1 && \underline{2} \\
\vdots && \vdots \\
\end{matrix}
\end{equation}
The entry marked with $\star$ must be a member of $S_1$, since otherwise, 
some third snake $S_3$ must have its lowest entry there.
But then, $S_3$ ends in the same row as $S_2$ starts, which violates Lemma \ref{lem:snakesstartstop}.

Using a similar reasoning for the other cases, we conclude that for two snakes that are immediately adjacent,
they are arranged in one of the four configurations below:
\begin{equation}\label{eq:snakeconfigs}
\begin{matrix}
& \vdots \\
& 1 \\
1 && \underline{2} \\
\vdots && \vdots \\
1 && \underline{2} \\
& 1 \\
& \vdots
\end{matrix}
\qquad \text{ or } \qquad
\begin{matrix}
& \vdots \\
& 1 \\
\underline{2} && 1 \\
\vdots && \vdots \\
\underline{2} && 1 \\
& 1 \\
& \vdots
\end{matrix}
\qquad \text{ or } \qquad
\begin{matrix}
& \vdots \\
& 1 \\
1 && \underline{2} \\
\vdots && \vdots \\
\underline{1} && 2 \\
& 2 \\
& \vdots
\end{matrix}
\qquad \text{ or } \qquad
\begin{matrix}
& \vdots \\
& 1 \\
\underline{2} && 1 \\
\vdots && \vdots \\
2 && \underline{1} \\
& 2 \\
& \vdots
\end{matrix}
\end{equation}
The action of switching the snake membership of entries in rows that contains entries from both snakes,
is just the action of interchanging configuration 1 and 2, or 3 and 4.
It is evident that this action preserves connectedness of the snakes involved,
and that the resulting snake partition is still proper.
\end{proof}
\medskip

Figure \ref{fig:snakemoveexamples} shows two examples where the 
actions in Lemma \ref{lem:snakemoves} are demonstrated.
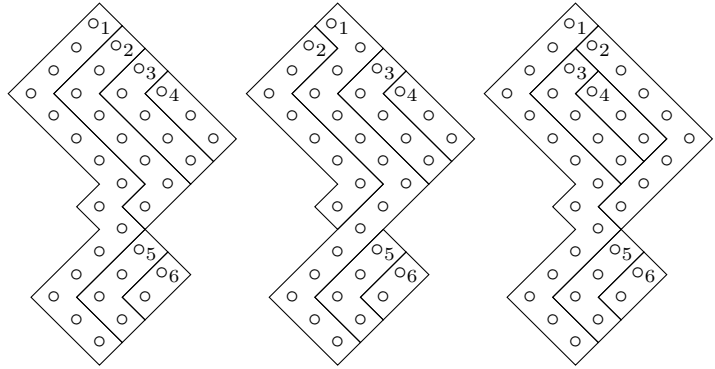
\begin{figure}[ht!]
\centering
\begin{tikzpicture}[scale=0.3]
\draw[black] (13,-3)--(14,-4)--(15,-5)--(16,-6)--(17,-7)--(16,-8)--(17,-9)--(16,-10)--(15,-11)--(14,-12)--(15,-13)--(16,-14)--(17,-15)--(18,-14)--(17,-13)--(16,-12)--(17,-11)--(18,-10)--(19,-9)--(18,-8)--(19,-7)--(18,-6)--(17,-5)--(16,-4)--(15,-3)--(16,-2)--(17,-1)--(18,0)--(17,1)--(16,0)--(15,-1)--(14,-2)--(13,-3)--cycle;
\draw[black] (17,-5)--(18,-6)--(19,-7)--(18,-8)--(19,-9)--(20,-8)--(21,-7)--(20,-6)--(19,-5)--(18,-4)--(17,-3)--(18,-2)--(19,-1)--(18,0)--(17,-1)--(16,-2)--(15,-3)--(16,-4)--(17,-5)--cycle;
\draw[black] (19,-5)--(20,-6)--(21,-7)--(22,-6)--(21,-5)--(20,-4)--(19,-3)--(20,-2)--(19,-1)--(18,-2)--(17,-3)--(18,-4)--(19,-5)--cycle;
\draw[black] (23,-5)--(22,-4)--(21,-3)--(20,-2)--(19,-3)--(20,-4)--(21,-5)--(22,-6)--(23,-5)--cycle;
\draw[black] (18,-14)--(19,-13)--(18,-12)--(19,-11)--(20,-10)--(19,-9)--(18,-10)--(17,-11)--(16,-12)--(17,-13)--(18,-14)--cycle;
\draw[black] (18,-12)--(19,-13)--(20,-12)--(21,-11)--(20,-10)--(19,-11)--(18,-12)--cycle;
\node at (17,0) {$\circ_1$};
\node at (16,-1) {$\circ$};
\node at (18,-1) {$\circ_2$};
\node at (15,-2) {$\circ$};
\node at (17,-2) {$\circ$};
\node at (19,-2) {$\circ_3$};
\node at (14,-3) {$\circ$};
\node at (16,-3) {$\circ$};
\node at (18,-3) {$\circ$};
\node at (20,-3) {$\circ_4$};
\node at (15,-4) {$\circ$};
\node at (17,-4) {$\circ$};
\node at (19,-4) {$\circ$};
\node at (21,-4) {$\circ$};
\node at (16,-5) {$\circ$};
\node at (18,-5) {$\circ$};
\node at (20,-5) {$\circ$};
\node at (22,-5) {$\circ$};
\node at (17,-6) {$\circ$};
\node at (19,-6) {$\circ$};
\node at (21,-6) {$\circ$};
\node at (18,-7) {$\circ$};
\node at (20,-7) {$\circ$};
\node at (17,-8) {$\circ$};
\node at (19,-8) {$\circ$};
\node at (18,-9) {$\circ$};
\node at (17,-10) {$\circ$};
\node at (19,-10) {$\circ_5$};
\node at (16,-11) {$\circ$};
\node at (18,-11) {$\circ$};
\node at (20,-11) {$\circ_6$};
\node at (15,-12) {$\circ$};
\node at (17,-12) {$\circ$};
\node at (19,-12) {$\circ$};
\node at (16,-13) {$\circ$};
\node at (18,-13) {$\circ$};
\node at (17,-14) {$\circ$};
\end{tikzpicture}
\begin{tikzpicture}[scale=0.3]
\draw[black] (16,-14)--(17,-15)--(18,-14)--(17,-13)--(16,-12)--(17,-11)--(18,-10)--(19,-9)--(20,-8)--(21,-7)--(20,-6)--(19,-5)--(18,-4)--(17,-3)--(18,-2)--(19,-1)--(18,0)--(17,1)--(16,0)--(17,-1)--(16,-2)--(15,-3)--(16,-4)--(17,-5)--(18,-6)--(19,-7)--(18,-8)--(17,-9)--(16,-10)--(15,-11)--(14,-12)--(15,-13)--(16,-14)--cycle;
\draw[black] (15,-5)--(16,-6)--(17,-7)--(16,-8)--(17,-9)--(18,-8)--(19,-7)--(18,-6)--(17,-5)--(16,-4)--(15,-3)--(16,-2)--(17,-1)--(16,0)--(15,-1)--(14,-2)--(13,-3)--(14,-4)--(15,-5)--cycle;
\draw[black] (19,-5)--(20,-6)--(21,-7)--(22,-6)--(21,-5)--(20,-4)--(19,-3)--(20,-2)--(19,-1)--(18,-2)--(17,-3)--(18,-4)--(19,-5)--cycle;
\draw[black] (23,-5)--(22,-4)--(21,-3)--(20,-2)--(19,-3)--(20,-4)--(21,-5)--(22,-6)--(23,-5)--cycle;
\draw[black] (18,-14)--(19,-13)--(18,-12)--(19,-11)--(20,-10)--(19,-9)--(18,-10)--(17,-11)--(16,-12)--(17,-13)--(18,-14)--cycle;
\draw[black] (18,-12)--(19,-13)--(20,-12)--(21,-11)--(20,-10)--(19,-11)--(18,-12)--cycle;
\node at (17,0) {$\circ_1$};
\node at (16,-1) {$\circ_2$};
\node at (18,-1) {$\circ$};
\node at (15,-2) {$\circ$};
\node at (17,-2) {$\circ$};
\node at (19,-2) {$\circ_3$};
\node at (14,-3) {$\circ$};
\node at (16,-3) {$\circ$};
\node at (18,-3) {$\circ$};
\node at (20,-3) {$\circ_4$};
\node at (15,-4) {$\circ$};
\node at (17,-4) {$\circ$};
\node at (19,-4) {$\circ$};
\node at (21,-4) {$\circ$};
\node at (16,-5) {$\circ$};
\node at (18,-5) {$\circ$};
\node at (20,-5) {$\circ$};
\node at (22,-5) {$\circ$};
\node at (17,-6) {$\circ$};
\node at (19,-6) {$\circ$};
\node at (21,-6) {$\circ$};
\node at (18,-7) {$\circ$};
\node at (20,-7) {$\circ$};
\node at (17,-8) {$\circ$};
\node at (19,-8) {$\circ$};
\node at (18,-9) {$\circ$};
\node at (17,-10) {$\circ$};
\node at (19,-10) {$\circ_5$};
\node at (16,-11) {$\circ$};
\node at (18,-11) {$\circ$};
\node at (20,-11) {$\circ_6$};
\node at (15,-12) {$\circ$};
\node at (17,-12) {$\circ$};
\node at (19,-12) {$\circ$};
\node at (16,-13) {$\circ$};
\node at (18,-13) {$\circ$};
\node at (17,-14) {$\circ$};
\end{tikzpicture}
\begin{tikzpicture}[scale=0.3]
\draw[black] (13,-3)--(14,-4)--(15,-5)--(16,-6)--(17,-7)--(16,-8)--(17,-9)--(16,-10)--(15,-11)--(14,-12)--(15,-13)--(16,-14)--(17,-15)--(18,-14)--(17,-13)--(16,-12)--(17,-11)--(18,-10)--(19,-9)--(18,-8)--(19,-7)--(18,-6)--(17,-5)--(16,-4)--(15,-3)--(16,-2)--(17,-1)--(18,0)--(17,1)--(16,0)--(15,-1)--(14,-2)--(13,-3)--cycle;
\draw[black] (21,-5)--(20,-6)--(19,-7)--(18,-8)--(19,-9)--(20,-8)--(21,-7)--(22,-6)--(23,-5)--(22,-4)--(21,-3)--(20,-2)--(19,-1)--(18,0)--(17,-1)--(18,-2)--(19,-3)--(20,-4)--(21,-5)--cycle;
\draw[black] (17,-5)--(18,-6)--(19,-7)--(20,-6)--(19,-5)--(18,-4)--(17,-3)--(18,-2)--(17,-1)--(16,-2)--(15,-3)--(16,-4)--(17,-5)--cycle;
\draw[black] (21,-5)--(20,-4)--(19,-3)--(18,-2)--(17,-3)--(18,-4)--(19,-5)--(20,-6)--(21,-5)--cycle;
\draw[black] (18,-14)--(19,-13)--(18,-12)--(19,-11)--(20,-10)--(19,-9)--(18,-10)--(17,-11)--(16,-12)--(17,-13)--(18,-14)--cycle;
\draw[black] (18,-12)--(19,-13)--(20,-12)--(21,-11)--(20,-10)--(19,-11)--(18,-12)--cycle;
\node at (17,0) {$\circ_1$};
\node at (16,-1) {$\circ$};
\node at (18,-1) {$\circ_2$};
\node at (15,-2) {$\circ$};
\node at (17,-2) {$\circ_3$};
\node at (19,-2) {$\circ$};
\node at (14,-3) {$\circ$};
\node at (16,-3) {$\circ$};
\node at (18,-3) {$\circ_4$};
\node at (20,-3) {$\circ$};
\node at (15,-4) {$\circ$};
\node at (17,-4) {$\circ$};
\node at (19,-4) {$\circ$};
\node at (21,-4) {$\circ$};
\node at (16,-5) {$\circ$};
\node at (18,-5) {$\circ$};
\node at (20,-5) {$\circ$};
\node at (22,-5) {$\circ$};
\node at (17,-6) {$\circ$};
\node at (19,-6) {$\circ$};
\node at (21,-6) {$\circ$};
\node at (18,-7) {$\circ$};
\node at (20,-7) {$\circ$};
\node at (17,-8) {$\circ$};
\node at (19,-8) {$\circ$};
\node at (18,-9) {$\circ$};
\node at (17,-10) {$\circ$};
\node at (19,-10) {$\circ_5$};
\node at (16,-11) {$\circ$};
\node at (18,-11) {$\circ$};
\node at (20,-11) {$\circ_6$};
\node at (15,-12) {$\circ$};
\node at (17,-12) {$\circ$};
\node at (19,-12) {$\circ$};
\node at (16,-13) {$\circ$};
\node at (18,-13) {$\circ$};
\node at (17,-14) {$\circ$};
\end{tikzpicture}
\caption{
The leftmost figure is the canonical snake partition of a tile.
The middle figure is the result after interchanging snakes $1$ and $2$,
and the last figure is the result after interchanging 
snakes $2$ and $3$ followed by $2$ and $4$ in the first figure.
}\label{fig:snakemoveexamples}
\end{figure}

\medskip

A snake in $T$ is a \defin{leftmost (rightmost)} snake in $T$
if there is no other snake in $T$ to the left (right) of it.

\begin{lemma}\label{lem:addingisok}
Let $G$ be a GT-pattern with a free tile  $T$ with a proper snake partition.
Suppose $S$ is a leftmost (rightmost) snake in $T$.
Then, adding (subtracting) $1$ to (from) all entries in $S$ gives a GT-pattern $G'$, 
where all inequalities are satisfied, and $T\setminus S$ is a tile or a disjoin union of tiles of $G'$.
\end{lemma}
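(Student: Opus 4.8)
The plan is to verify two things: that every Gelfand--Tsetlin inequality still holds in $G'$, and that $T\setminus S$ is a tile or a disjoint union of tiles of $G'$. Write $v$ for the common value of the entries of $T$. Since only the entries of $S$ change (each $v$ becoming $v+1$), the only inequalities that can be affected are those relating an entry $s\in S$ to one of its four adjacent entries. Recall that among the neighbours of an entry, the two \emph{left} neighbours ($x^{i+1}_j$ and $x^{i-1}_{j-1}$) are $\ge$ it while the two \emph{right} neighbours ($x^{i+1}_{j+1}$ and $x^{i-1}_j$) are $\le$ it. Increasing $s$ from $v$ to $v+1$ therefore only strengthens every inequality in which $s$ is the larger side (the right-neighbour inequalities), and it also strengthens those inequalities whose other entry is itself in $S$ (both sides rise by $1$). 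Hence the whole inequality statement reduces to one claim: \emph{every left neighbour of an entry of $S$ that does not belong to $S$ is strictly greater than $v$.}

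To prove this claim I would use that $S$ is leftmost. Because no snake lies to the left of $S$, in each row that $S$ meets its entry is the leftmost entry of $T$ in that row; since $T$ meets each row in a contiguous block of columns (the first of the listed properties of free tiles), the entry of $S$ in row $i$ sits at the left end of that block. Fix $s=x^i_j\in S$ and consider its below-left neighbour $x^{i-1}_{j-1}$ (the above-left neighbour is symmetric). If $S$ also has an entry in row $i-1$, snake-connectivity forces it to be $x^{i-1}_{j-1}$ or $x^{i-1}_{j}$, and being in $S$ it is the leftmost $T$-entry of that row: in the first case the neighbour itself lies in $S$, while in the second case $x^{i-1}_{j-1}$ sits strictly left of the block and so is not in $T$ at all, hence exceeds $v$. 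The remaining possibility is that $S$ \emph{ends} at row $i-1$, i.e. $s$ is its lowest entry.

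This last possibility is the heart of the matter, and it is where properness enters. Suppose for contradiction that $x^{i-1}_{j-1}\in T\setminus S$, so it equals $v$ and lies in some other snake $\hat S$. Its two upward neighbours are $x^{i}_{j-1}$, which lies strictly left of $s$ and hence outside $T$, and $x^{i}_{j}=s\in S$; thus $\hat S$ cannot be continued upward past row $i-1$ and therefore \emph{starts} at row $i-1$. But $S$ \emph{ends} at row $i-1$, so two distinct snakes start or end in the same row, contradicting properness. Hence $x^{i-1}_{j-1}\notin T$, and being a left neighbour it is $\ge v$ and $\ne v$, so $>v$. The above-left neighbour at the top of $S$ is handled identically, Lemma \ref{lem:snakesstartstop} now pairing the start of $S$ with a forced snake end. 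This establishes the claim, and since entries only increase they remain nonnegative integers, so $G'$ is a genuine GT-pattern.

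Finally, $T\setminus S$ is a tile or a disjoint union of tiles of $G'$: the entries equal to $v$ in $G'$ are exactly those of $T\setminus S$ together with the $v$-valued entries lying outside $T$ in $G$. As $T$ was a maximal tile of $G$, no $v$-valued entry outside $T$ is adjacent to any entry of $T$, hence none is adjacent to $T\setminus S$; and the entries of $S$ now equal $v+1$, creating no new $v$-adjacencies. Thus each connected component of $T\setminus S$ is a maximal connected set of equal entries in $G'$, i.e. a tile. The rightmost/subtract case is entirely symmetric: one subtracts $1$ along a rightmost snake, the roles of left and right neighbours are exchanged, and the same properness argument applies, with nonnegativity preserved because in the intended application the entries of $T$ are not divisible by $k$, so $v\ge 1$. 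I expect the endpoint case of the claim to be the main obstacle, since it genuinely fails for non-proper partitions: without properness a foreign equal entry could sit directly to the upper- or lower-left of an end of $S$ and break a single inequality after the shift.
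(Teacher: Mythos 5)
Your proof is correct and follows essentially the same route as the paper: reduce everything to showing that each top‑left or bottom‑left neighbour of an entry of the leftmost snake $S$ is either in $S$ or lies outside $T$ (hence is strictly greater, by maximality of the tile), with right‑neighbour inequalities only strengthened. The only difference is that you spell out the endpoint case via properness (a foreign snake would be forced to start where $S$ ends, or end where $S$ starts), which the paper asserts in one line; this is a welcome expansion, not a different argument.
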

\begin{proof}
Let us first consider the case of a leftmost snake $S$ and consider an entry $x^i_j$ in $S$.
Since the partition of $T$ is proper, any entry $x^{i'}_{j'}$ which is top-left or 
bottom-left of $x^i_j$ is either an entry in $S$ or not in $T$. 

The inequalities in a GT-pattern ensures that entries outside $T$ top-left or bottom-left of entries in $S$
are strictly greater than entries in $S$. Therefore, adding 1 to all elements in $S$
does not violate the inequalities fulfilled in a GT-pattern.
Furthermore, the entries top-right and bottom-right of entries 
in $S$ are clearly smaller or equal to entries in $S$ after increasing entries in $S$.

A similar reasoning works for the case of a rightmost snake in which all elements are decreased by 1.
It is clear that $T \setminus S$ is a tile or a disjoin union of tiles of $G'$ (since no other entries except those in $S$
has been changed) and that the snakes in $T \setminus S$ constitute proper 
snake partitions of the tiles in $T \setminus S$.
\end{proof}
\medskip

The following two lemmas are used to handle a technical special
case later on.

\begin{lemma}\label{lem:adjacenttiles}
Let $T_1$ and $T_2$ be free tiles in a GT-pattern $G$ with content $c+1$ respective $c$ for some integer $c$.
Assume $T_1$ and $T_2$ are partitioned with proper snake partitions.
Then, if a snake starts (ends) at row $i$ in $T_1$ then there is no snake that starts (ends) at row $i$ in $T_2$.
\end{lemma}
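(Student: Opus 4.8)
The plan is to route everything through a single monotone boundary function and show that a start-event in $T_1$ and a start-event in $T_2$ impose contradictory demands on it. For a value $v$ let $f_v(i)$ be the number of entries in row $i$ that are $\geq v$; as each row is weakly decreasing, these are the $f_v(i)$ leftmost entries. The interlacing inequalities $x^{i+1}_j \geq x^i_j \geq x^{i+1}_{j+1}$ give $f_v(i+1) \in \{f_v(i), f_v(i)+1\}$, so $f_v$ is weakly increasing in $i$ with steps $0$ or $1$. Put $g(i) := f_{c+1}(i)$, the boundary between the region of entries $\geq c+1$ (which houses $T_1$) and the region of entries $\leq c$ (which houses $T_2$). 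I will prove that a snake of $T_1$ starting at row $i$ forces $g(i+1) = g(i)$, while a snake of $T_2$ starting at row $i$ forces $g(i+1) = g(i)+1$. Since these constrain the \emph{same} function $g$ at the \emph{same} step, they cannot both hold, which is the ``starts'' half of the claim; the ``ends'' half is symmetric.

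First I would convert start/end events into changes of tile row-width. Writing $r_i(T)$ for the number of entries of $T$ in row $i$, Lemma \ref{lem:snakesstartstop} tells us that in a proper snake partition a snake has its topmost entry in row $i$ (it \emph{starts} there) exactly when $r_{i+1}(T) < r_i(T)$, and a snake has its lowest entry in row $i+1$ (it \emph{ends} just below) exactly when $r_{i+1}(T) > r_i(T)$. Next I would identify these widths with differences of the $f_v$. The key elementary fact is that all entries of one value in a single row form one contiguous block lying in a single tile: if $x^i_j = x^i_{j+1}$ then interlacing forces $x^{i+1}_{j+1}$ to the same value, joining them. Hence whenever $T_1$ meets row $i$ it is the only $(c+1)$-tile there and $r_i(T_1) = g(i) - f_{c+2}(i)$, and likewise $r_i(T_2) = f_c(i) - g(i)$. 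For an event \emph{interior} to a tile both rows $i$ and $i+1$ are met, and since each of $f_{c+2}, g, f_c$ moves by $0$ or $1$, a width decrease of $T_1$ forces $g$ flat and $f_{c+2}$ up, giving $g(i+1)=g(i)$, while a width decrease of $T_2$ forces $g$ up and $f_c$ flat, giving $g(i+1)=g(i)+1$, exactly as promised.

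The main obstacle is the \emph{boundary} events, occurring at the unique topmost or lowest entry of a tile, where the width identity can break down: $T_1$ may top out at row $i$ while a different $(c+1)$-tile still occupies row $i+1$, so $r_{i+1}(T_1)=0$ need not equal $g(i+1)-f_{c+2}(i+1)$. I would settle these directly from interlacing at the single extreme entry, using that $T_1$ and $T_2$ are \emph{free} so the adjacent row genuinely exists. If $T_1$ tops out at row $i$, its top entry sits in column $q = g(i)$; its upper neighbours $x^{i+1}_q$ and $x^{i+1}_{q+1}$ cannot equal $c+1$, and interlacing gives $x^{i+1}_{q+1} \leq c$, so $g(i+1) \leq g(i)$ and hence $g(i+1)=g(i)$. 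If instead $T_2$ tops out at row $i$, its top entry sits in column $p = g(i)+1$ and the same reasoning yields $x^{i+1}_p \geq c+1$, so $g(i+1) \geq g(i)+1$ and hence $g(i+1)=g(i)+1$. Thus the boundary events reproduce the interior conclusions, completing the ``starts'' half.

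For the ``ends'' half, a snake ends just below a tile exactly when its row-width increases from row $i$ to $i+1$; the interior computation then gives $g(i+1)=g(i)+1$ for $T_1$ and $g(i+1)=g(i)$ for $T_2$, and the bottom-of-tile boundary events are handled by the mirror-image interlacing argument one row beneath the unique lowest entry. In both halves the contradiction is identical: one step of the monotone function $g$ cannot be simultaneously flat (as $T_1$ requires) and a unit increase (as $T_2$ requires).
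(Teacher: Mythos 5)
Your proof is correct. The underlying contradiction is the same one the paper exploits---between rows $i$ and $i+1$ the boundary separating the values $\geq c+1$ from the values $\leq c$ can shift by at most one position, and a simultaneous start (or end) event in $T_1$ and $T_2$ would demand two incompatible shifts---but your packaging is genuinely different. The paper argues locally: it notes that both tiles would have to exhibit the ``one fewer entry in row $i+1$'' trapezoid, that the entries of each tile in a row are consecutive and the two blocks abut (since $c$ and $c+1$ are consecutive integers), and that the entry wedged between the two blocks in row $i+1$ is then forced by interlacing to equal $c$ or $c+1$, contradicting the assumed row counts. You instead introduce the global threshold functions $f_v(i)$, prove they are weakly increasing with unit steps, identify tile row-widths with differences $f_v - f_{v+1}$, and read off start/end events as forced step values of the single function $g=f_{c+1}$. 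What your route buys is robustness: the degenerate cases where a tile tops out or bottoms out at the row in question (so that the width identity for the adjacent row fails) are isolated and dispatched explicitly by the interlacing inequalities at the extreme entry, whereas the paper's two-line proof silently subsumes them. What it costs is a little more machinery ($f_v$, the contiguity-of-equal-values observation, and the converse direction of Lemma~\ref{lem:snakesstartstop}, which you correctly extract from properness together with the lemma's exhaustive case split). Both are valid; yours would also transfer more directly to statements about several stacked tiles, since all the information is carried by the one monotone function $g$.
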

\begin{proof}
Assume that there is a snake that starts at row $i$ in both $T_1$ and $T_2$.
Lemma \ref{lem:snakesstartstop} gives that if
$r_{i}$ is the number of entries in row $i$ in $T_1$, then
$r_i = r_{i+1}+1$. Similarly, if $s_{i}$ is the number of entries in row $i$ in $T_2$,
then $s_i = s_{i+1}+1$.
Hence, row $i+1$ and $i$ look like 
\[
 \begin{matrix}
& \star && \star && \dots  && \star \\
\star && \star && \dots && \star && \star \\
\end{matrix}
\]
in both $T_1$ and $T_2$.

Since each row in $G$ is decreasing, the entries in row $i$ and $i+1$ in $T_1$
and $T_2$ must be consecutive in row $i$ and $i+1$ respectively.
But it is impossible to have two arrangements as above, where the entries in respective rows are adjacent.

A similar reasoning proves that two snakes from $T_1$ respective $T_2$ cannot end at the same row.
\end{proof}

\begin{lemma}\label{lem:tileunion}
Let $T_1$ and $T_2$ be free tiles in a GT-pattern $G$ with content $c+1$ respective $c$ for some $c$,
and that there is some entry in $T_1$ adjacent to some entry in $T_2$.
Then $T_1 \cup T_2$ has the shape of a free tile, that is, these entries fulfills the four properties that
a free tile fulfills.
\end{lemma}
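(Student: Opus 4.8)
The plan is to show directly that the cell set $U = T_1 \cup T_2$ obeys the two defining shape-properties of a free tile, namely property (1) (if two entries of a row lie in $U$ then so do all intermediate ones) and property (2) (if $x^i_j,x^i_{j+1}\in U$ then $x^{i+1}_{j+1},x^{i-1}_j\in U$). Since, as noted right after the list of tile properties, (1) and (2) imply (3) and (4) for a connected cell-set, and $U$ is connected because each $T_\ell$ is connected and by hypothesis some entry of $T_1$ is adjacent to some entry of $T_2$, this will suffice. Throughout I use only that every row of $G$ is weakly decreasing and that each entry is squeezed between its two diagonal neighbours in a row above or below it, together with the fact that all entries of $T_1$ equal $c+1$ and all entries of $T_2$ equal $c$. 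Because $T_1,T_2$ are free, every row meeting $U$ is interior to $G$, so all neighbours invoked below exist.

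For property (2), suppose $x^i_j,x^i_{j+1}\in U$. The GT-inequalities give $x^i_{j+1}\le x^{i+1}_{j+1}\le x^i_j$ and $x^i_{j+1}\le x^{i-1}_{j}\le x^i_j$, so each forced neighbour takes a value lying weakly between the two given ones, hence a value in $\{c,c+1\}$. If a forced neighbour equals $c+1$ then, being $\le x^i_j$, it forces $x^i_j=c+1$; it is diagonally adjacent to $x^i_j\in T_1$ (as its up-right or down-right neighbour), so it lies in $T_1$. If it equals $c$ then, being $\ge x^i_{j+1}$, it forces $x^i_{j+1}=c$ and, being adjacent to $x^i_{j+1}\in T_2$, it lies in $T_2$. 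Either way it belongs to $U$, which is property (2).

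The \textbf{main obstacle is property (1)}, because a priori an intermediate entry of value $c+1$ could belong to some other value-$(c+1)$ tile rather than to $T_1$. I will remove this ambiguity with two observations. First, a maximal run of equal entries in a single row lies entirely in one tile: if $x^i_j=x^i_{j+1}=v$ then the squeeze $v=x^i_{j+1}\le x^{i+1}_{j+1}\le x^i_j=v$ forces $x^{i+1}_{j+1}=v$, and this cell is adjacent to both, so all entries of the run share a tile. Second, since each row is weakly decreasing, the entries equal to a fixed value in a row form a single interval; hence if $T_1$ meets row $i$ it contains the whole $(c+1)$-interval of that row, and likewise $T_2$ contains the whole $c$-interval. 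Now take $x^i_a,x^i_b\in U$ with $a<b$; monotonicity forces every intermediate entry into $\{c,c+1\}$. Every intermediate entry equal to $c+1$ is $\le x^i_a$, so $x^i_a=c+1\in T_1$, whence that entry lies in the row-$i$ $(c+1)$-interval, which equals $T_1$; symmetrically every intermediate entry equal to $c$ forces $x^i_b=c\in T_2$ and lies in $T_2$. Thus all intermediate entries lie in $U$, establishing property (1).

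Having verified (1) and (2) for the connected set $U$, properties (3) and (4) follow as remarked after the list of tile properties, so $U=T_1\cup T_2$ fulfils the four properties of a free tile. I expect the only delicate point to be the bookkeeping in property (1), namely pinning each intermediate entry to the correct one of the two tiles; property (2) and the connectivity are immediate from the squeezing inequalities.
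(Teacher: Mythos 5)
Your proof is correct and follows essentially the same route as the paper: verify properties (1) and (2) directly from the squeezing inequalities $x^i_{j+1}\le x^{i+1}_{j+1}\le x^i_j$ and $x^i_{j+1}\le x^{i-1}_{j}\le x^i_j$, and observe that these two properties imply the remaining two. The only difference is that you make explicit the bookkeeping the paper's proof leaves implicit --- namely that an intermediate or forced entry of value $c+1$ (resp.\ $c$) lies in $T_1$ (resp.\ $T_2$) itself rather than in some other tile of the same content, via the single-interval observation --- which is a small but genuine tightening of the published argument.
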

\begin{proof}
It suffices to prove the first two properties, since these implies the last two.
The first property, is clear, if $x^i_j \in T_1 \cup T_2$
and $x^i_l \in T_1 \cup T_2$ clearly all entries between $x^i_j$ and $x^i_l$ in row $i$
are either $c$ or $c+1$, so these must be members of $T_1$ or $T_2$.

Now, if $x^i_j = c+1$ and $x^i_{j+1} = c$ and both entries are in
$T_1 \cup T_2$, then $x^{i+1}_{j+1}$ and $x^{i-1}_{j}$ must be either $c$ or $c+1$,
(since $c+1=x^i_j \geq x^{i+1}_{j+1} \geq x^i_{j+1} = c$ and $c+1=x^i_j \geq x^{i-1}_{j} \geq x^i_{j+1} = c$),
so  $x^{i+1}_{j+1}$ and $x^{i-1}_{j}$ are also entries in $T_1 \cup T_2$. This establishes the proof.
\end{proof}

\bigskip

\subsection{Snake cycles}

Given $G \in \GT_{k\lambdavec/k\muvec,k\nuvec}$ construct an undirected, loop-free graph $\Gamma_G$
(with possible multiple edges) as follows:
The vertices of $\Gamma_G$ corresponds to rows of $G$
and every snake starting at row $i$ and ending at row $j$ give an edge $(i,j)$ in $\Gamma_G$.
Corollary \ref{corr:snakedegree} implies that the degree of 
each vertex in $\Gamma_G$ is either zero or at least two.
It is easy to show that any such graph contains at least one simple cycle
(each vertex appear at most once in the cycle), if the snake partition of $G$ contains some snake.
\medskip

Given a set of edges $C$ in $\Gamma_G$ that constitute such a cycle,
assign a direction to each edge in $C$ so that the cycle becomes directed.
Each directed edge $i \rightarrow j$ in $C$ corresponds to some snake $S$ in $G$.
Assign the \emph{sign} $\sign(j-i)$ to every such snake
and all other snakes in $G$ are unsigned.

Since $C$ is a cycle, it follows that, for each row $R$ in $G$,
the number of entries in $R$ belonging to a positive snake,
is equal to the number of entries in $R$ that belongs to a negative snake.
This is called a \defin{signed snake cycle}.
Note that if $C$ is a \emph{simple} cycle, where $S_1$ and $S_2$ are two signed snakes in $C$
such that $S_1$ ends in the same row as $S_2$ starts, then $S_1$ and $S_2$ have the same sign.

A GT-pattern with a highlighted simple snake cycle is presented in Figure \ref{fig:signedSnakesExample}.
The corresponding graph $\Gamma_G$ is shown in Figure \ref{fig:snakeGraph}.
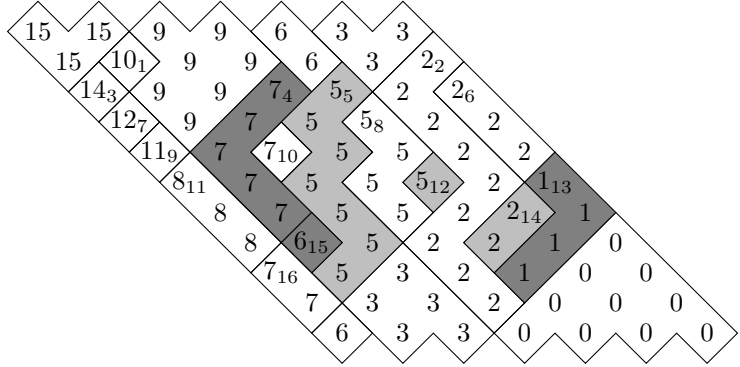
\begin{figure}[ht!]
\centering
\begin{tikzpicture}[scale=0.4]
\draw[color=black] (12,-10)--(13,-11)--(14,-10)--(13,-9)--(12,-10)--cycle;
\draw[color=black] (12,0)--(11,1)--(10,0)--(11,-1)--(12,-2)--(13,-1)--(12,0)--cycle;
\draw[color=black] (6,0)--(5,1)--(4,0)--(3,1)--(2,0)--(3,-1)--(4,-2)--(5,-1)--(6,0)--cycle;
\draw[color=black] (16,0)--(15,1)--(14,0)--(13,1)--(12,0)--(13,-1)--(14,-2)--(15,-1)--(16,0)--cycle;
\draw[color=black] (14,-8)--(13,-9)--(14,-10)--(15,-11)--(16,-10)--(17,-11)--(18,-10)--(17,-9)--(16,-8)--(15,-7)--(14,-8)--cycle;
\draw[color=black] (9,-3)--(10,-2)--(11,-1)--(10,0)--(9,1)--(8,0)--(7,1)--(6,0)--(7,-1)--(6,-2)--(7,-3)--(8,-4)--(9,-3)--cycle;
\draw[color=black] (21,-7)--(20,-8)--(19,-9)--(18,-10)--(19,-11)--(20,-10)--(21,-11)--(22,-10)--(23,-11)--(24,-10)--(25,-11)--(26,-10)--(25,-9)--(24,-8)--(23,-7)--(22,-6)--(21,-7)--cycle;
%
\draw[black] (5,-1)--(6,-2)--(7,-1)--(6,0)--(5,-1)--cycle;
\draw[black] (18,-4)--(17,-3)--(16,-2)--(17,-1)--(16,0)--(15,-1)--(14,-2)--(15,-3)--(16,-4)--(17,-5)--(16,-6)--(15,-7)--(16,-8)--(17,-9)--(18,-10)--(19,-9)--(18,-8)--(17,-7)--(18,-6)--(19,-5)--(18,-4)--cycle;
\draw[black] (4,-2)--(5,-3)--(6,-2)--(5,-1)--(4,-2)--cycle;
\filldraw[black,fill=gray] (11,-1)--(10,-2)--(9,-3)--(8,-4)--(9,-5)--(10,-6)--(11,-7)--(12,-6)--(11,-5)--(10,-4)--(11,-3)--(12,-2)--(11,-1)--cycle;
\filldraw[black,fill=lightgray] (11,-3)--(12,-4)--(11,-5)--(12,-6)--(13,-7)--(12,-8)--(13,-9)--(14,-8)--(15,-7)--(14,-6)--(13,-5)--(14,-4)--(13,-3)--(14,-2)--(13,-1)--(12,-2)--(11,-3)--cycle;
\draw[black] (20,-4)--(19,-3)--(18,-2)--(17,-1)--(16,-2)--(17,-3)--(18,-4)--(19,-5)--(20,-4)--cycle;
\draw[black] (5,-3)--(6,-4)--(7,-3)--(6,-2)--(5,-3)--cycle;
\draw[black] (15,-7)--(16,-6)--(15,-5)--(16,-4)--(15,-3)--(14,-2)--(13,-3)--(14,-4)--(13,-5)--(14,-6)--(15,-7)--cycle;
\draw[black] (6,-4)--(7,-5)--(8,-4)--(7,-3)--(6,-4)--cycle;
\draw[black] (10,-4)--(11,-5)--(12,-4)--(11,-3)--(10,-4)--cycle;
\draw[black] (11,-7)--(10,-6)--(9,-5)--(8,-4)--(7,-5)--(8,-6)--(9,-7)--(10,-8)--(11,-7)--cycle;
\filldraw[black,fill=lightgray] (15,-5)--(16,-6)--(17,-5)--(16,-4)--(15,-5)--cycle;
\filldraw[black,fill=gray] (20,-8)--(21,-7)--(22,-6)--(21,-5)--(20,-4)--(19,-5)--(20,-6)--(19,-7)--(18,-8)--(19,-9)--(20,-8)--cycle;
\filldraw[black,fill=lightgray] (17,-7)--(18,-8)--(19,-7)--(20,-6)--(19,-5)--(18,-6)--(17,-7)--cycle;
\filldraw[black,fill=gray] (11,-7)--(12,-8)--(13,-7)--(12,-6)--(11,-7)--cycle;
\draw[black] (12,-8)--(11,-7)--(10,-8)--(11,-9)--(12,-10)--(13,-9)--(12,-8)--cycle;
\node at (3,0) {$15$};
\node at (5,0) {$15$};
\node at (7,0) {$9$};
\node at (9,0) {$9$};
\node at (11,0) {$6$};
\node at (13,0) {$3$};
\node at (15,0) {$3$};
\node at (4,-1) {$15$};
\node at (6,-1) {$10_1$};
\node at (8,-1) {$9$};
\node at (10,-1) {$9$};
\node at (12,-1) {$6$};
\node at (14,-1) {$3$};
\node at (16,-1) {$2_2$};
\node at (5,-2) {$14_3$};
\node at (7,-2) {$9$};
\node at (9,-2) {$9$};
\node at (11,-2) {$7_4$};
\node at (13,-2) {$5_5$};
\node at (15,-2) {$2$};
\node at (17,-2) {$2_6$};
\node at (6,-3) {$12_7$};
\node at (8,-3) {$9$};
\node at (10,-3) {$7$};
\node at (12,-3) {$5$};
\node at (14,-3) {$5_8$};
\node at (16,-3) {$2$};
\node at (18,-3) {$2$};
\node at (7,-4) {$11_9$};
\node at (9,-4) {$7$};
\node at (11,-4) {$7_{10}$};
\node at (13,-4) {$5$};
\node at (15,-4) {$5$};
\node at (17,-4) {$2$};
\node at (19,-4) {$2$};
\node at (8,-5) {$8_{11}$};
\node at (10,-5) {$7$};
\node at (12,-5) {$5$};
\node at (14,-5) {$5$};
\node at (16,-5) {$5_{12}$};
\node at (18,-5) {$2$};
\node at (20,-5) {$1_{13}$};
\node at (9,-6) {$8$};
\node at (11,-6) {$7$};
\node at (13,-6) {$5$};
\node at (15,-6) {$5$};
\node at (17,-6) {$2$};
\node at (19,-6) {$2_{14}$};
\node at (21,-6) {$1$};
\node at (10,-7) {$8$};
\node at (12,-7) {$6_{15}$};
\node at (14,-7) {$5$};
\node at (16,-7) {$2$};
\node at (18,-7) {$2$};
\node at (20,-7) {$1$};
\node at (22,-7) {$0$};
\node at (11,-8) {$7_{16}$};
\node at (13,-8) {$5$};
\node at (15,-8) {$3$};
\node at (17,-8) {$2$};
\node at (19,-8) {$1$};
\node at (21,-8) {$0$};
\node at (23,-8) {$0$};
\node at (12,-9) {$7$};
\node at (14,-9) {$3$};
\node at (16,-9) {$3$};
\node at (18,-9) {$2$};
\node at (20,-9) {$0$};
\node at (22,-9) {$0$};
\node at (24,-9) {$0$};
\node at (13,-10) {$6$};
\node at (15,-10) {$3$};
\node at (17,-10) {$3$};
\node at (19,-10) {$0$};
\node at (21,-10) {$0$};
\node at (23,-10) {$0$};
\node at (25,-10) {$0$};
\end{tikzpicture}
\caption{
A GT-pattern $G$ with a cycle of snakes. Light gray indicate positive snakes and the dark gray snakes are negative.
The cycle corresponds to the snakes $4,15,14,12,13,5$ in that order.
}\label{fig:signedSnakesExample}
\end{figure}

\begin{figure}[!ht]
\centering
\begin{tikzpicture}[->,>=stealth',shorten >=1pt,auto,node distance=1cm,
  thick,main node/.style={circle,draw}]
\node[main node] (1) {1};
\node[main node] (2) [above of=1] {2};
\node[main node] (3) [above of=2] {3};
\node[main node] (4) [above of=3] {4};
\node[main node] (5) [above of=4] {5};
\node[main node] (6) [above of=5] {6};
\node[main node] (7) [above of=6] {7};
\node[main node] (8) [above of=7] {8};
\node[main node] (9) [above of=8] {9};
\node[main node] (10) [above of=9] {10};
\path[-,dashed,every node/.style={font=\sffamily\small}]
   (1) edge [bend right=80,looseness=1] node [right] {2} (10)
   (9) edge [bend right=20] node [right] {1} (10)
  (8) edge [bend right=20] node [right] {3} (9)
  (4) edge [solid,<-,bend right=70] node [right] {4} (9)
  (2) edge [solid,->,bend left=100,looseness=1.5] node [left] {5} (9)
 (6) edge [bend left=60] node [left] {6} (9)
(7) edge [bend left=20] node [left] {7} (8)
  (4) edge [bend right=60] node [right] {8} (8)
(6) edge [bend left=20] node [left] {9} (7)
(6) edge [bend right=20] node [right] {10} (7)
(3) edge [bend left=90,looseness=1.7] node [left] {11} (6)
(5) edge [solid,->,bend right=20] node [right] {12} (6)
(2) edge [solid,<-,bend left=90,looseness=1.8] node [left] {13} (6)
(3) edge [solid,->,bend left=70] node [left] {14} (5)
(3) edge [solid,<-,bend right=20] node [right] {15} (4)
(1) edge [bend right=70] node [right] {16} (3);
\end{tikzpicture}
\caption{
The graph $\Gamma_G$ corresponding to the GT-pattern in
\ref{fig:signedSnakesExample}. The solid directed edges corresponds to the snake cycle,
and edge labels corresponds to the corresponding snakes in $G$.
Note that there are several choices of a simple snake cycle in $G$.
}\label{fig:snakeGraph}
\end{figure}
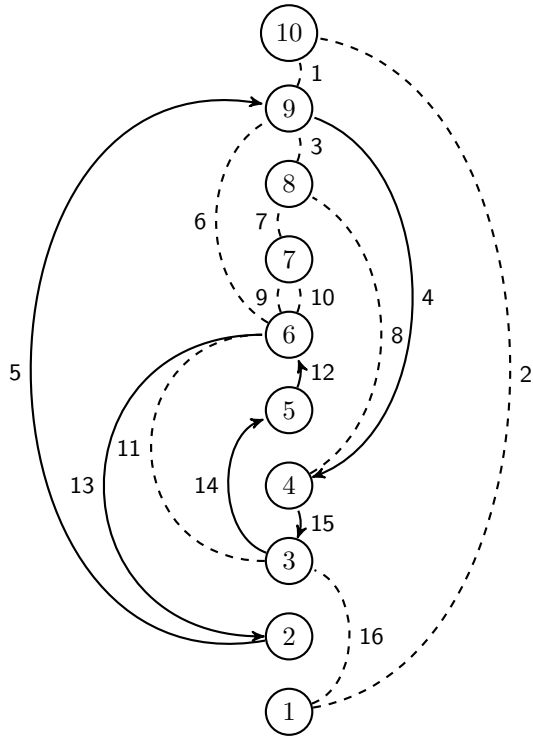

\bigskip

Let $G \in \GT_{k\lambdavec/k\muvec,k\nuvec}$ and let $C$ be a set of signed snakes in the canonical snake partition of $G$.
By repeatedly applying Lemma \ref{lem:snakemoves} on the signed snakes in a tile $T$,
one can reach a configuration where all positive snakes in $T$ are to the left of all non-signed snakes,
and all negative snakes in $T$ are to the right of the non-signed snakes in $T$.
When this is done in every tile of $G$, we have obtained a new proper snake partition of $G$,
where we say that the snakes in $G$ have been sign-sorted with respect to $C$.

Consider a sign-sorted proper snake partition of a GT-pattern $G$ obtained from a snake cycle $C$.
Let $c$ be a natural number, and consider the set of snakes in $G$ with
\begin{itemize}
\item content $c$ and positive sign and
\item content $c+1$ and negative sign.
\end{itemize}
Consider the connected components of this set. 
One can prove that each such connected component has the shape of a free tile,
using the same reasoning as in Lemma \ref{lem:tileunion}.
Call such a component a \defin{mixed} tile. 
Notice that each mixed tile has a snake partition consisting of snakes of one or two of the types listed above.

\begin{lemma}\label{lem:mixedsort}
Let $G$ be GT-pattern with a sign-sorted snake partition obtained from a snake cycle $C$ and let $T$ be a mixed tile in $G$.
If $C$ is a \emph{simple} cycle, then the snake partition of $T$ is proper.
\end{lemma}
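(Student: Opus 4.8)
The plan is to check the two defining requirements of a proper snake partition for the mixed tile $T$ directly---that at most one snake starts at each row and at most one snake ends at each row---by pitting the structure of the simple cycle $C$ against Lemma~\ref{lem:adjacenttiles}. The key preliminary observation is that every snake of $T$ is one of the signed snakes of $C$: by the definition of a mixed tile, its positive snakes all have content $c$ and its negative snakes all have content $c+1$. Consequently a failure of properness can only take the form of two distinct snakes of $T$ sharing a start row, two sharing an end row, or one starting in the same row where another ends, and I would rule these out in turn.

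First consider two snakes $S_1,S_2$ of $T$ that start at the same row $i$; the case of a shared end row is entirely symmetric. Each of them contributes to $\Gamma_G$ an edge incident to the vertex $i$, having $i$ as its higher endpoint. This is where simplicity of $C$ is used in an essential way: a simple directed cycle passes through $i$ at most once, so $i$ has exactly one incoming and one outgoing edge of $C$, and therefore $S_1$ and $S_2$ must be exactly these two edges. Writing the outgoing edge as $i\to j$ and the incoming edge as $j'\to i$, with $j,j'<i$ since $i$ is the higher endpoint of both, their signs are $\sign(j-i)=-1$ and $\sign(i-j')=+1$; hence $S_1$ and $S_2$ carry opposite signs, and therefore opposite contents $c$ and $c+1$. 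Letting $U_2$ be the content-$c$ free tile of $G$ containing the positive one of $S_1,S_2$ and $U_1$ the content-$(c+1)$ free tile containing the negative one, each inherits a proper snake partition from the sign-sorted partition, and a snake of each starts at row $i$---contradicting Lemma~\ref{lem:adjacenttiles}. The shared-end-row case runs identically, the two edges now meeting at their common lower endpoint with signs $+1$ and $-1$, and invokes the ``ends'' clause of Lemma~\ref{lem:adjacenttiles}.

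I expect the sign bookkeeping above to be the heart of the argument, and it pinpoints why simplicity is indispensable: in a non-simple cycle a vertex may be traversed more than once, so two edges sharing the endpoint $i$ could both be outgoing (or both incoming) and hence have equal sign, permitting two snakes of equal content to start at row $i$---a situation Lemma~\ref{lem:adjacenttiles} does not forbid. The remaining, and most delicate, configuration is when one snake $S_1$ starts and a different snake $S_2$ ends at the same row $i$. Here the same edge-orientation computation shows, exactly as in the remark preceding the lemma, that $S_1$ and $S_2$ share a sign, hence a common content; I would then argue from the fact that $T$ has the shape of a free tile, together with the monotonicity of the entries along the diagonals, that the entries of $T$ of that common value form a single connected region, i.e.\ a single free tile of $G$. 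Since $S_1$ and $S_2$ both lie in that tile, one starting and one ending at row $i$ contradicts the properness of its snake partition, completing the verification.
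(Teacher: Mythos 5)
Your proof is correct and rests on the same two ingredients as the paper's: Lemma~\ref{lem:adjacenttiles} for a shared start or end row, and the edge-orientation argument at a vertex of the simple cycle for the start-meets-end configuration; you merely organize the cases by type of violation rather than by sign pattern as the paper does. The connectivity claim you sketch at the end (that the same-content, same-sign snakes of $T$ lie in a single free tile of $G$) is precisely the fact the paper invokes implicitly when it asserts that same-sign snakes form ``a subset of a proper snake partition of a tile in $G$,'' so your argument introduces no gap beyond what the paper itself leaves to the reader.
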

\begin{proof}
If all snakes in $T$ have the same sign, then these snakes is a subset of a proper 
snake partition of a tile in $G$ and the statement follows.

Thus, it is enough to show that if $S_-$ and $S_+$ are two snakes with opposite signs in $T$, 
then 
\begin{itemize}
\item $S_-$ and $S_+$ do not start or end in the same row, and 
\item $S_-$ do not start (end) in the same row as $S_+$ end (start).
\end{itemize}

Note that $S_+$ is a member of some tile with content $c$ and that $S_-$ is a member of a tile with content $c+1$.
Thus, according to Lemma \ref{lem:adjacenttiles}, the first situation above is not possible.

The second situation corresponds to two edges in the snake cycle $C$ that 
both enters or exits the same vertex in $\Gamma_G$. 
This is not possible if $C$ is simple.
\end{proof}

\begin{proposition}\label{prop:cycleaction}
Let $G \in \GT_{k\lambdavec/k\muvec,k\nuvec}$ and let $C$ be a simple cycle of
signed snakes in the canonical snake partition of $G$.
Adding $1$ to all entries in positive snakes and $-1$ to all entries in negative snakes,
followed by reordering the entries in each row decreasingly, gives a GT-pattern $G'$
in $\GT_{k\lambdavec/k\muvec,k\nuvec}$.
\end{proposition}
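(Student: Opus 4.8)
The plan is to verify that the operation described produces a valid element of $\GT_{k\lambdavec/k\muvec,k\nuvec}$ by checking three things in turn: that the inequalities defining a GT-pattern are satisfied, that the top and bottom rows are preserved, and that the type $k\nuvec$ is preserved. The sign-sorting step (which precedes this proposition) reorganizes each tile so that positive snakes sit to the left and negative snakes to the right of the unsigned snakes. I would first invoke Lemma~\ref{lem:mixedsort}: since $C$ is simple, each mixed tile inherits a proper snake partition, which is what lets the local moves of Lemma~\ref{lem:addingisok} apply cleanly.

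First I would argue that adding $1$ to a leftmost positive snake and subtracting $1$ from a rightmost negative snake does not violate the defining inequalities, by appealing directly to Lemma~\ref{lem:addingisok}. The subtlety is that the proposition adds $1$ to \emph{all} positive snakes simultaneously, not just a leftmost one, so I would peel the snakes off one at a time: within a mixed tile, the leftmost snake is positive (by the sign-sorting), so adding $1$ to it is legitimate; by Lemma~\ref{lem:addingisok} the remainder $T\setminus S$ is again a disjoint union of tiles with proper snake partitions, so the next leftmost positive snake is again exposed, and one proceeds inductively. The same reasoning applied from the right handles the negative snakes. The point of the sign-sorting is precisely to guarantee that at each stage the snake being modified is extremal in its (current) tile, so that no entry top-left or bottom-left of it lies outside the tile yet fails to be strictly larger.

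Next I would check invariance of the shape and type. Because $C$ is a cycle, for every row $R$ the number of entries belonging to positive snakes equals the number belonging to negative snakes, as noted in the definition of a signed snake cycle; hence adding $1$ to positive-snake entries and $-1$ to negative-snake entries leaves the \emph{multiset} of entries in each row unchanged up to the net zero change in row sum. After reordering each row decreasingly, the row sums $|\xvec^i|$ are therefore unchanged, so by Lemma~\ref{lm:gtproperties}(c) the type is still $k\nuvec$, and by parts (a) and (b) the top and bottom rows $k\lambdavec$ and $k\muvec$ are preserved (snakes come from free tiles, so no top- or bottom-row entry is altered). Finally, reordering each row decreasingly guarantees that each row is again a partition; what must be confirmed is that the reordered rows still satisfy the interlacing inequalities between consecutive rows, so that the reordered array is genuinely a GT-pattern.

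I expect the main obstacle to be exactly this last point: after the additions and subtractions, the inequalities \emph{within} each row may fail (an incremented entry could overtake its left neighbor), which is why the reordering step is needed, but one must then confirm that reordering does not destroy the interlacing between rows. The key observation is that the increments and decrements only ever swap the relative order of entries that are equal or differ by one and lie in the same tile, so sorting each row permutes entries by small amounts in a controlled, ``staircase'' fashion that respects the diagonal inequalities. I would make this precise by showing that the change in each row is a rearrangement realizing the same interlacing pattern, appealing to the structural descriptions in Lemma~\ref{lem:snakemoves} and the configurations in \eqref{eq:snakeconfigs}. Verifying that the sorted array interlaces correctly is the crux, and the simplicity of $C$ together with Lemma~\ref{lem:mixedsort} is what rules out the problematic collisions where two opposite-signed snakes would meet at a common start/end row.
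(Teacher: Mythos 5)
Your proposal follows the paper's proof essentially step for step: sign-sort the snakes in each tile with Lemma~\ref{lem:snakemoves}, use Lemma~\ref{lem:mixedsort} (which needs simplicity of $C$) to get proper snake partitions of the mixed tiles, peel off extremal snakes one at a time with Lemma~\ref{lem:addingisok}, and check row sums and the top/bottom rows via Lemma~\ref{lm:gtproperties} to see that shape and type are preserved. The one imprecision worth flagging is your claim that ``within a mixed tile, the leftmost snake is positive (by the sign-sorting)'': the initial sign-sorting actually leaves each mixed tile with its negative content-$(c{+}1)$ snakes to the \emph{left} of its positive content-$c$ snakes (since larger entries sit further left in a row), which is exactly the problematic configuration, and a second round of Lemma~\ref{lem:snakemoves} moves \emph{inside each mixed tile} --- legitimate precisely because Lemma~\ref{lem:mixedsort} makes its partition proper --- is what puts the positive snakes leftmost before the peeling can begin.
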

\begin{proof}
Start with sign-sorting the snakes in $G$ with respect to $C$.
If there are no mixed tiles in $G$ that contains snakes with two different signs,
we can repeatedly use Lemma \ref{lem:addingisok} to add $1$ to all entries of $G$ in positive snakes
and subtract $1$ from all negative snakes and obtain a GT-pattern $G'$.
The sign-sorting of the signed snakes corresponds precisely to sorting the rows in decreasing order.

However, if $G$ do contain mixed tiles with snakes of both signs,
then the above procedure might fail to produce a proper GT-pattern $G'$;
a negative snake with content $c+1$ to the left of a positive snake with content $c$,
will not yield a proper GT-pattern when adding $1$ to the positive 
snake and subtracting $1$ from the negative snake.

The solution to this is as follows: Lemma \ref{lem:mixedsort} ensures 
that the snake partition of a mixed tile is proper.
Lemma \ref{lem:snakemoves} can therefore be applied repeatedly in every mixed tile, 
such that the positive snakes in these tiles are placed to the left of all negative snakes.
After this sorting procedure, it is safe to add and subtract $1$ as above.

Lemma \ref{lem:addingisok} ensures that all inequalities that 
needs to be fulfilled in a GT-pattern hold in the result $G'$

\end{proof}

Figure \ref{fig:signedSnakesExample2} illustrate Proposition \ref{prop:cycleaction}.
Note that the snake cycle in Proposition \ref{prop:cycleaction} must be simple.
If it is not, the result might not be a valid GT-pattern, as shown in Figure \ref{fig:cycleproblem}.
\begin{figure}[ht!]
\centering
\begin{tikzpicture}[scale=0.4]
\filldraw[color=black,fill=white] (12,-10)--(13,-11)--(14,-10)--(13,-9)--(12,-10)--cycle;
\filldraw[color=black,fill=white] (6,0)--(5,1)--(4,0)--(3,1)--(2,0)--(3,-1)--(4,-2)--(5,-1)--(6,0)--cycle;
\filldraw[color=black,fill=white] (16,0)--(15,1)--(14,0)--(13,1)--(12,0)--(13,-1)--(14,-2)--(15,-1)--(16,0)--cycle;
\filldraw[color=black,fill=white] (9,-3)--(10,-2)--(11,-1)--(10,0)--(9,1)--(8,0)--(7,1)--(6,0)--(7,-1)--(6,-2)--(7,-3)--(8,-4)--(9,-3)--cycle;
\filldraw[color=black,fill=white] (17,-5)--(16,-6)--(15,-7)--(14,-8)--(13,-9)--(14,-10)--(15,-11)--(16,-10)--(17,-11)--(18,-10)--(17,-9)--(16,-8)--(17,-7)--(18,-6)--(17,-5)--cycle;
\filldraw[color=black,fill=white] (19,-5)--(20,-6)--(19,-7)--(18,-8)--(19,-9)--(18,-10)--(19,-11)--(20,-10)--(21,-11)--(22,-10)--(23,-11)--(24,-10)--(25,-11)--(26,-10)--(25,-9)--(24,-8)--(23,-7)--(22,-6)--(21,-5)--(20,-4)--(19,-5)--cycle;
\filldraw[color=black,fill=white] (14,-8)--(15,-7)--(14,-6)--(15,-5)--(14,-4)--(13,-3)--(14,-2)--(13,-1)--(12,0)--(11,1)--(10,0)--(11,-1)--(10,-2)--(9,-3)--(10,-4)--(9,-5)--(10,-6)--(11,-7)--(12,-8)--(13,-9)--(14,-8)--cycle;
%
\filldraw[black,dashed,fill=gray] (11,-1)--(10,-2)--(9,-3)--(10,-4)--(9,-5)--(10,-6)--(11,-7)--(12,-6)--(11,-5)--(12,-4)--(11,-3)--(12,-2)--(11,-1)--cycle;
%
\filldraw[black,dashed,fill=lightgray] (11,-3)--(12,-4)--(11,-5)--(12,-6)--(11,-7)--(12,-8)--(13,-9)--(14,-8)--(13,-7)--(14,-6)--(13,-5)--(14,-4)--(13,-3)--(14,-2)--(13,-1)--(12,-2)--(11,-3)--cycle;
\filldraw[black,dashed,fill=lightgray] (13,-5)--(14,-6)--(15,-5)--(14,-4)--(13,-5)--cycle;
%
\filldraw[black,dashed,fill=lightgray] (15,-7)--(16,-8)--(17,-7)--(18,-6)--(17,-5)--(16,-6)--(15,-7)--cycle;
\filldraw[black,dashed,fill=gray] (20,-8)--(21,-7)--(22,-6)--(21,-5)--(20,-4)--(19,-5)--(20,-6)--(19,-7)--(18,-8)--(19,-9)--(20,-8)--cycle;
\filldraw[black,dashed,fill=gray] (13,-7)--(14,-8)--(15,-7)--(14,-6)--(13,-7)--cycle;

\draw[black] (5,-1)--(6,-2)--(7,-1)--(6,0)--(5,-1)--cycle;
\draw[black] (18,-8)--(19,-7)--(20,-6)--(19,-5)--(18,-4)--(17,-3)--(16,-2)--(17,-1)--(16,0)--(15,-1)--(14,-2)--(15,-3)--(16,-4)--(17,-5)--(18,-6)--(17,-7)--(16,-8)--(17,-9)--(18,-10)--(19,-9)--(18,-8)--cycle;
\draw[black] (4,-2)--(5,-3)--(6,-2)--(5,-1)--(4,-2)--cycle;
\draw[black] (20,-4)--(19,-3)--(18,-2)--(17,-1)--(16,-2)--(17,-3)--(18,-4)--(19,-5)--(20,-4)--cycle;

\draw[black] (5,-3)--(6,-4)--(7,-3)--(6,-2)--(5,-3)--cycle;
%
\draw[black] (6,-4)--(7,-5)--(8,-4)--(7,-3)--(6,-4)--cycle;
\draw[black] (8,-4)--(9,-5)--(10,-4)--(9,-3)--(8,-4)--cycle;
\draw[black] (11,-7)--(10,-6)--(9,-5)--(8,-4)--(7,-5)--(8,-6)--(9,-7)--(10,-8)--(11,-7)--cycle;
 \draw[black] (12,-8)--(11,-7)--(10,-8)--(11,-9)--(12,-10)--(13,-9)--(12,-8)--cycle;
\node at (3,0) {$15$};
\node at (5,0) {$15$};
\node at (7,0) {$9$};
\node at (9,0) {$9$};
\node at (11,0) {$6$};
\node at (13,0) {$3$};
\node at (15,0) {$3$};
\node at (4,-1) {$15$};
\node at (6,-1) {$10_1$};
\node at (8,-1) {$9$};
\node at (10,-1) {$9$};
\node at (12,-1) {$6$};
\node at (14,-1) {$3$};
\node at (16,-1) {$2_2$};
\node at (5,-2) {$14_3$};
\node at (7,-2) {$9$};
\node at (9,-2) {$9$};
\node at (11,-2) {$6_4$};
\node at (13,-2) {$6_5$};
\node at (15,-2) {$2$};
\node at (17,-2) {$2_6$};
\node at (6,-3) {$12_7$};
\node at (8,-3) {$9$};
\node at (10,-3) {$6$};
\node at (12,-3) {$6$};
\node at (14,-3) {$5_8$};
\node at (16,-3) {$2$};
\node at (18,-3) {$2$};
\node at (7,-4) {$11_9$};
\node at (9,-4) {$7_{10}$};
\node at (11,-4) {$6$};
\node at (13,-4) {$6$};
\node at (15,-4) {$5$};
\node at (17,-4) {$2$};
\node at (19,-4) {$2$};
\node at (8,-5) {$8_{11}$};
\node at (10,-5) {$6$};
\node at (12,-5) {$6$};
\node at (14,-5) {$6_{12}$};
\node at (16,-5) {$5$};
\node at (18,-5) {$2$};
\node at (20,-5) {$0_{13}$};
\node at (9,-6) {$8$};
\node at (11,-6) {$6$};
\node at (13,-6) {$6$};
\node at (15,-6) {$5$};
\node at (17,-6) {$3_{14}$};
\node at (19,-6) {$2$};
\node at (21,-6) {$0$};
\node at (10,-7) {$8$};
\node at (12,-7) {$6$};
\node at (14,-7) {$5_{15}$};
\node at (16,-7) {$3$};
\node at (18,-7) {$2$};
\node at (20,-7) {$0$};
\node at (22,-7) {$0$};
\node at (11,-8) {$7_{16}$};
\node at (13,-8) {$6$};
\node at (15,-8) {$3$};
\node at (17,-8) {$2$};
\node at (19,-8) {$0$};
\node at (21,-8) {$0$};
\node at (23,-8) {$0$};
\node at (12,-9) {$7$};
\node at (14,-9) {$3$};
\node at (16,-9) {$3$};
\node at (18,-9) {$2$};
\node at (20,-9) {$0$};
\node at (22,-9) {$0$};
\node at (24,-9) {$0$};
\node at (13,-10) {$6$};
\node at (15,-10) {$3$};
\node at (17,-10) {$3$};
\node at (19,-10) {$0$};
\node at (21,-10) {$0$};
\node at (23,-10) {$0$};
\node at (25,-10) {$0$};
\end{tikzpicture}
\caption{The result after completing the snake moves in
the highlighted snake cycle in in Figure \ref{fig:signedSnakesExample}.
Entries are shaded to illustrate the final positions of the snakes after sorting.
}\label{fig:signedSnakesExample2}
\end{figure}

\begin{figure}[ht!]
\centering
\begin{tikzpicture}[scale=0.4]
\draw[black] (6,-2)--(5,-1)--(4,0)--(3,1)--(2,0)--(3,-1)--(4,-2)--(5,-3)--(6,-2)--cycle;
\draw[black] (4,0)--(5,-1)--(6,0)--(5,1)--(4,0)--cycle;
\draw[black] (6,0)--(7,-1)--(8,0)--(7,1)--(6,0)--cycle;
\filldraw[black,fill=lightgray] (5,-1)--(6,-2)--(7,-1)--(6,0)--(5,-1)--cycle;
\filldraw[black,fill=gray] (7,-1)--(8,-2)--(9,-1)--(8,0)--(7,-1)--cycle;
\filldraw[black,fill=gray] (6,-2)--(7,-3)--(8,-2)--(7,-1)--(6,-2)--cycle;
\filldraw[black,fill=lightgray] (8,-2)--(9,-3)--(10,-2)--(9,-1)--(8,-2)--cycle;
\draw[black] (5,-3)--(6,-4)--(7,-3)--(6,-2)--(5,-3)--cycle;
\draw[black] (7,-3)--(8,-4)--(9,-3)--(8,-2)--(7,-3)--cycle;
\draw[black] (9,-3)--(10,-4)--(11,-3)--(10,-2)--(9,-3)--cycle;
\node at (3,0) {$12$};
\node at (5,0) {$12$};
\node at (7,0) {$9$};
\node at (4,-1) {$12$};
\node at (6,-1) {$10_1$};
\node at (8,-1) {$5_2$};
\node at (5,-2) {$12$};
\node at (7,-2) {$8_3$};
\node at (9,-2) {$4_4$};
\node at (6,-3) {$9$};
\node at (8,-3) {$6$};
\node at (10,-3) {$3$};
\end{tikzpicture}
\begin{tikzpicture}[scale=0.4]
\draw[black] (6,-2)--(5,-1)--(4,0)--(3,1)--(2,0)--(3,-1)--(4,-2)--(5,-3)--(6,-2)--cycle;
\draw[black] (4,0)--(5,-1)--(6,0)--(5,1)--(4,0)--cycle;
\draw[black] (6,0)--(7,-1)--(8,0)--(7,1)--(6,0)--cycle;
\draw[black] (5,-1)--(6,-2)--(7,-1)--(6,0)--(5,-1)--cycle;
\draw[black] (7,-1)--(8,-2)--(9,-1)--(8,0)--(7,-1)--cycle;
\draw[black] (6,-2)--(7,-3)--(8,-2)--(7,-1)--(6,-2)--cycle;
\draw[black] (8,-2)--(9,-3)--(10,-2)--(9,-1)--(8,-2)--cycle;
\draw[black] (5,-3)--(6,-4)--(7,-3)--(6,-2)--(5,-3)--cycle;
\draw[black] (7,-3)--(8,-4)--(9,-3)--(8,-2)--(7,-3)--cycle;
\draw[black] (9,-3)--(10,-4)--(11,-3)--(10,-2)--(9,-3)--cycle;
\node at (3,0) {$12$};
\node at (5,0) {$12$};
\node at (7,0) {$9$};
\node at (4,-1) {$12$};
\node at (6,-1) {$11$};
\node at (8,-1) {$\mathbf 4$};
\node at (5,-2) {$12$};
\node at (7,-2) {$7$};
\node at (9,-2) {$\mathbf 5$};
\node at (6,-3) {$9$};
\node at (8,-3) {$6$};
\node at (10,-3) {$3$};
\end{tikzpicture}
\caption{The condition that $C$ is a simple cycle in Proposition \ref{prop:cycleaction} is important.
The non-simple snake cycle in the left GT-pattern above does not give a proper GT-pattern.
The reason is that the snakes $2$ and $4$ in the first pattern
has opposite signs, even though snake $2$ ends where snake $4$ starts.
}\label{fig:cycleproblem}
\end{figure}

Given a GT-pattern $G$, consider the vector
$\vvec_G=(x^1_1,x^1_2,\dots,x^1_n,x^2_1,\dots,x^m_n)$,
given by the concatenation of the rows $1,2,\dots,m$ in $G$.
For two GT-patterns $G$ and $G'$ of the same size,
we write $G \lexless G'$ if $\vvec_{G}$ is (strictly) lexicographically smaller than $\vvec_{G'}$.

\begin{lemma}\label{lem:ordering}
Let $G \in \GT_{k\lambdavec/k\muvec,k\nuvec}$ where $C_1$ is a simple signed snake cycle in $G$.
Let $C_2$ be the same cycle as $C_1$ but with the opposite orientation of the edges.
Using Proposition \ref{prop:cycleaction}, $C_1$ and $C_2$ give rise to two new GT-patterns $G_1$ and $G_2$, respectively.

Then $G_1 \lexless G \lexless G_2$ or $G_2 \lexless G \lexless G_1$.
\end{lemma}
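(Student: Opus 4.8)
The plan is to locate the lowest row in which the three patterns $G$, $G_1$ and $G_2$ cease to agree, and to show that at that row $G_1$ and $G_2$ lie on opposite sides of $G$ in lexicographic order. First I would record that passing from $C_1$ to the reversed cycle $C_2$ flips the sign of every snake in the cycle; hence, by Proposition \ref{prop:cycleaction}, $G_2$ is produced from $G$ by the \emph{negative} of the increments that produce $G_1$. In particular the set of entries of $G$ that are altered is the same for both actions, so $G$, $G_1$ and $G_2$ share identical entries in every row strictly below the lowest row touched by the cycle. Since $\vvec_G$ lists the rows from the bottom ($\xvec^1 = k\muvec$) upward, the lexicographic comparison will be decided inside this lowest altered row.

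Next I would pin down that row. Orient $C$ and let $v_{\min}$ be the lowest vertex of the (simple) cycle in $\Gamma_G$. Because the cycle is simple, exactly two of its snakes, $S_+$ and $S_-$, are incident to $v_{\min}$, and each has $v_{\min}$ as the lower endpoint of its edge, so both have their lowest entry in the row $r=v_{\min}+1$ and no other snake of $C$ reaches row $r$. Examining the orientation shows the edge leaving $v_{\min}$ runs upward and the one entering it runs downward, so $S_+$ and $S_-$ receive opposite signs. Thus in row $r$ exactly two entries change: the value $c_+$ of $S_+$ and the value $c_-$ of $S_-$; the $C_1$-action sends $\{c_+,c_-\}$ to $\{c_++1,\,c_--1\}$ and the $C_2$-action sends it to $\{c_+-1,\,c_-+1\}$, all other entries of row $r$ being untouched.

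The heart of the argument, and the step I expect to be the main obstacle, is to rule out the degenerate possibilities $c_+=c_-$ and $|c_+-c_-|=1$, for in those cases the two altered values coincide or are adjacent and $G_1$, $G_2$ would perturb row $r$ in the \emph{same} direction (or not at all), destroying the claimed betweenness. I would dispose of $c_+=c_-$ by tile connectivity: if $S_+$ and $S_-$ had a common value $c$ in row $r$, then, since that row is weakly decreasing, every intermediate entry would also equal $c$, forcing $S_+$ and $S_-$ into a single tile; but a proper snake partition has at most one snake ending in a given row, a contradiction. I would exclude $|c_+-c_-|=1$ by applying Lemma \ref{lem:adjacenttiles} to the two tiles containing $S_+$ and $S_-$, whose contents would then differ by exactly one while both snakes end in row $v_{\min}$. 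Hence $|c_+-c_-|\geq 2$.

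Finally, with $|c_+-c_-|\geq 2$ the two changed entries remain sorted apart after the action, so replacing the larger value by one more and the smaller by one less strictly increases the partial sums of the decreasingly sorted row over a range of indices while leaving the others unchanged, making that row lexicographically larger; the opposite replacement makes it lexicographically smaller. Since $C_2$ does to row $r$ exactly what $C_1$ does with the roles of $c_+$ and $c_-$ interchanged, one of $G_1$, $G_2$ is larger and the other smaller than $G$ in row $r$, while all lower rows agree. Therefore $G_1 \lexless G \lexless G_2$ or $G_2 \lexless G \lexless G_1$, according as $c_+<c_-$ or $c_+>c_-$.
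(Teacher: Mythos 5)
Your proof is correct and follows essentially the same route as the paper's: both locate the lowest row met by the cycle, use simplicity to see that exactly two oppositely-signed snakes have their lowest entries there, lying in distinct tiles whose contents differ by at least two (via Lemma \ref{lem:adjacenttiles}), and then read off the lexicographic comparison from the first altered entry of that row. Your explicit exclusion of the degenerate cases $c_+=c_-$ and $|c_+-c_-|=1$ is a careful elaboration of a point the paper states only in passing.
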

\begin{proof}
Consider the first row $i$ in $G$ which intersects some snakes in $C_1$.
This row must contain then contain the lowest entry of exactly two snakes $S_1$ and $S_2$
in $C_1$, since $C_1$ is a simple cycle. We may assume that
the lowest entry in $S_1$ is to the left of the lowest entry in $S_2$.
The snakes $S_1$ and $S_2$ are members of two different free tiles, $T_1$, $T_2$,
and the content of these tiles must differ by at least two, according to
Lemma \ref{lem:snakesstartstop} and Lemma \ref{lem:adjacenttiles}.

Thus, using Proposition \ref{prop:cycleaction} to obtain $G_1$ or $G_2$
either increases the leftmost entry, or decreases the rightmost entry of row $i$ in $T_1$.
Note that both these entries appear (in $\vvec_G$) before any other entry affected by this action.

Thus, if $C_1$ makes $S_1$ negative, it is evident that $G_1 \lexless G \lexless G_2$. 
In the case of a positive $S_1$, we have that $G_2 \lexless G \lexless G_1$.
\end{proof}
For example, the snake cycle in $G$ in Figure \ref{fig:signedSnakesExample}
gives a GT-pattern $G'$ such that $G \lexless G'$,
the first entry which differs between $\vvec_G$ and $\vvec_{G'}$
corresponds to the lowest entry in snake $5$.
\medskip 

\begin{corollary}\label{cor:largestismultiple}
If $\GT_{k\lambdavec/k\muvec,k\nuvec}$ is non-empty, then the lexicographically 
largest (smallest) element $G$ in $\GT_{k\lambdavec/k\muvec,k\nuvec}$
has no snakes in the snake partition. That is, all entries in $G$ are multiples of $k$.
\end{corollary}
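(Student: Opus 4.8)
The plan is to run an extremal argument: if the lexicographically largest (respectively smallest) pattern still carried a snake, one could use a snake cycle to move to a strictly larger (respectively smaller) pattern inside the same set, contradicting extremality. All the substantive work has already been packaged into Proposition~\ref{prop:cycleaction} and Lemma~\ref{lem:ordering}, so what remains is to assemble these facts.

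First I would record the equivalence of the two phrasings of the conclusion. By the observation following Definition~\ref{def:snakes}, an entry of $G$ fails to be a multiple of $k$ precisely when it lies in some snake of the canonical snake partition. Hence ``$G$ has an empty snake partition'' and ``every entry of $G$ is a multiple of $k$'' say the same thing, and it suffices to establish the first.

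Next, suppose toward a contradiction that the lexicographically largest element $G \in \GT_{k\lambdavec/k\muvec,k\nuvec}$ has at least one snake. I would pass to the graph $\Gamma_G$ of the preceding subsection. By Corollary~\ref{corr:snakedegree} every vertex of $\Gamma_G$ has degree $0$ or at least $2$, and since $G$ has a snake its edge set is nonempty. A finite loop-free multigraph in which every non-isolated vertex has degree at least two must contain a simple cycle: start at any non-isolated vertex and follow distinct edges, which is always possible by the degree condition, until some vertex is revisited. Let $C_1$ be such a simple cycle.

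Finally, I would apply Proposition~\ref{prop:cycleaction} to $C_1$ and to the cycle $C_2$ obtained by reversing the orientation of $C_1$, yielding two patterns $G_1, G_2 \in \GT_{k\lambdavec/k\muvec,k\nuvec}$. Lemma~\ref{lem:ordering} then forces $G_1 \lexless G \lexless G_2$ or $G_2 \lexless G \lexless G_1$; in either case one of $G_1, G_2$ is strictly lexicographically larger than $G$, contradicting the maximality of $G$. The symmetric argument applied to the lexicographically smallest element produces a strictly smaller pattern, giving the analogous contradiction. The only step that requires any attention is the purely graph-theoretic existence of the simple cycle, but this is immediate from the minimum-degree condition, so no genuine obstacle remains.
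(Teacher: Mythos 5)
Your proposal is correct and follows exactly the argument the paper intends: the paper leaves the corollary as an immediate consequence of Proposition~\ref{prop:cycleaction} and Lemma~\ref{lem:ordering}, having already noted that $\Gamma_G$ contains a simple cycle whenever a snake exists, and your extremal contradiction assembles these pieces in the same way. No gaps.
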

%

\medskip

Theorem \ref{thm:mainthm} now follows from Corollary \ref{cor:largestismultiple}:
\begin{proof}[Proof of Theorem \ref{thm:mainthm}]
The $(\Leftarrow)$ part of Theorem \ref{thm:mainthm} has already been addressed, so it suffices to prove 
\[
|\GT_{k\lambdavec/k\muvec,k\nuvec}>0| \Rightarrow |\GT_{\lambdavec/\muvec,\nuvec}| >0.
\]
Corollary \ref{cor:largestismultiple} implies that all entries in the lex-largest element $G$ in 
$\GT_{k\lambdavec/k\muvec,k\nuvec}$ are multiples of $k$.
Therefore, $\frac{1}{k}G \in \GT_{\lambdavec/\muvec,\nuvec}$,
so this set is non-empty.
\end{proof}

\medskip

The same technique can be used to prove a generalization of $K$-Fultons Theorem.
This also follows from a result in \cite{Knutson00}, where the corresponding statement was proved for the Littlewood-Richardson coefficients.
and as mentioned before, skew Kostka numbers are special cases of Littlewood-Richardson coefficients.
\begin{proposition}[$K$-Fultons Theorem]\label{eqn:kequalsone}
For fixed partitions $\lambdavec,\muvec$ and $\nuvec,$ the following holds for any integer $k \geq 1:$
\begin{align*}
K_{k\lambdavec/k\muvec,k\nuvec} = 1 \Leftrightarrow K_{\lambdavec/\muvec,\nuvec} = 1.
\end{align*}
\end{proposition}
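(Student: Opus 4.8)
The plan is to deduce this proposition directly from Theorem~\ref{thm:mainthm} and Corollary~\ref{cor:largestismultiple}, exploiting that the stretching map $G \mapsto kG$ is an injection. By Lemma~\ref{lm:gtproperties}, entrywise multiplication by $k$ sends $\GT_{\lambdavec/\muvec,\nuvec}$ injectively into $\GT_{k\lambdavec/k\muvec,k\nuvec}$, and conversely, dividing a pattern all of whose entries are multiples of $k$ by $k$ lands back in $\GT_{\lambdavec/\muvec,\nuvec}$. Both implications should then reduce to a uniqueness count layered on top of the saturation theorem, which already pins down exactly when each side is nonzero.

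For the $(\Rightarrow)$ direction I would assume $K_{k\lambdavec/k\muvec,k\nuvec} = 1$. Theorem~\ref{thm:mainthm} gives $K_{\lambdavec/\muvec,\nuvec} > 0$, so there is at least one pattern in $\GT_{\lambdavec/\muvec,\nuvec}$. If there were two distinct such patterns $G_1 \neq G_2$, then $kG_1$ and $kG_2$ would be two distinct elements of $\GT_{k\lambdavec/k\muvec,k\nuvec}$, contradicting $K_{k\lambdavec/k\muvec,k\nuvec} = 1$. Hence $K_{\lambdavec/\muvec,\nuvec} = 1$.

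For the $(\Leftarrow)$ direction I would assume $K_{\lambdavec/\muvec,\nuvec} = 1$, so that Theorem~\ref{thm:mainthm} again gives $K_{k\lambdavec/k\muvec,k\nuvec} > 0$. Suppose, for contradiction, that $K_{k\lambdavec/k\muvec,k\nuvec} \geq 2$. Since $\lexless$ is a total order on the finite set $\GT_{k\lambdavec/k\muvec,k\nuvec}$, its lexicographically largest element $G^+$ and lexicographically smallest element $G^-$ are then distinct. Corollary~\ref{cor:largestismultiple} applies to \emph{both} extremes and shows that every entry of $G^+$ and of $G^-$ is a multiple of $k$. Therefore $\tfrac{1}{k}G^+$ and $\tfrac{1}{k}G^-$ are two distinct elements of $\GT_{\lambdavec/\muvec,\nuvec}$, forcing $K_{\lambdavec/\muvec,\nuvec} \geq 2$, a contradiction. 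Hence $K_{k\lambdavec/k\muvec,k\nuvec} = 1$.

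The substance of the argument lies entirely in the earlier machinery, so I expect no serious obstacle; the one point that must not be glossed over is that Corollary~\ref{cor:largestismultiple} is invoked for both the lexicographic maximum and the minimum. Knowing only that one distinguished extremal pattern is divisible by $k$ would produce a single downstairs pattern and could not separate two upstairs patterns, whereas it is precisely the simultaneous divisibility of the two distinct extremes that yields two distinct elements of $\GT_{\lambdavec/\muvec,\nuvec}$. I therefore anticipate that the only care needed is in observing that the lex-maximum and lex-minimum are genuinely distinct once there are at least two patterns (immediate from $\lexless$ being a total order) and in matching the parenthetical ``(smallest)'' clause of the corollary to the lex-minimum.
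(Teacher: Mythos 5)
Your proof is correct and follows essentially the same route as the paper: the forward direction via injectivity of the stretching map $G \mapsto kG$, and the reverse direction by applying Corollary~\ref{cor:largestismultiple} to both the lexicographic maximum and minimum of $\GT_{k\lambdavec/k\muvec,k\nuvec}$ and dividing by $k$. The paper merely phrases this as the equivalence $|\GT_{k\lambdavec/k\muvec,k\nuvec}|>1 \Leftrightarrow |\GT_{\lambdavec/\muvec,\nuvec}|>1$ rather than arguing by contradiction, but the content is identical.
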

\begin{proof}
Using Theorem \ref{thm:mainthm}, it suffices to show 
that $|\GT_{k\lambdavec/k\muvec,k\nuvec}|>1 \Leftrightarrow |\GT_{\lambdavec/\muvec,\nuvec}|>1$.

The $(\Leftarrow)$ direction is trivial, every element in $\GT_{\lambdavec/\muvec,\nuvec}$
can (injectively) be mapped to an element $\GT_{k\lambdavec/k\muvec,k\nuvec}$ by entrywise multiplication by $k$.

Assume $G_1,G_2 \in \GT_{k\lambdavec/k\muvec,k\nuvec}$, where $G_1 \lexless G_2$.
Corollary \ref{cor:largestismultiple} then implies 
that there are $G_{min},G_{max} \in \GT_{k\lambdavec/k\muvec,k\nuvec}$
such that $G_{min} \lexleq G_1 \lexless G_2 \lexleq G_{max}$
and all entries in $G_{min}$ and $G_{max}$ are multiples of $k$.

Hence, $\frac{1}{k}G_{min}$ and $\frac{1}{k}G_{max}$ are two different elements in
$\GT_{\lambdavec/\muvec,\nuvec}$, which concludes the proof.
\end{proof}


\section{Final remarks and refinements on open questions}

Given $G \in GT_{\lambdavec/\muvec,\nuvec}$, consider the function 
\[
 p_G(k) = |\{ G' \in GT_{k\lambdavec/k\muvec,k\nuvec} | G' \lexleq k G \}|.
\]
Corollary \ref{cor:largestismultiple} implies that for the lex-largest element $G_{max}$ in  $GT_{\lambdavec/\muvec,\nuvec}$,
the value of $p_{G_{max}}(k)$ is just the skew Kostka number $K_{k\lambdavec/k\muvec,k\nuvec}$.
It is known \cite{Kirillov88thebethe,Rassart2004} that
$k \mapsto K_{k\lambdavec/k\muvec,k\nuvec}$ is a polynomial in $k$,
which motivates the following conjecture:
\begin{conjecture}\label{conj:polynomiality}
For any fixed $G \in GT_{\lambdavec/\muvec,\nuvec}$, the function $p_G(k)$ is polynomial in $k$.
\end{conjecture}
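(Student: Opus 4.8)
The plan is to recast $p_G(k)$ as a lattice-point count in dilated rational polytopes and then to run Ehrhart theory. First I would encode $\GT_{\lambdavec/\muvec,\nuvec}$ as the set of integer points of the rational polytope $P\subset\R^N$ cut out by the interlacing inequalities of \eqref{eq:gtpattern} together with the affine constraints fixing the top row to $\lambdavec$, the bottom row to $\muvec$, and the successive row-sum differences to $\nuvec$ (Lemma \ref{lm:gtproperties}). Here $N$ is the total number of entries and $\vvec_G=(a_1,\dots,a_N)$. Stretching is exactly dilation: $\GT_{k\lambdavec/k\muvec,k\nuvec}=kP\cap\Z^N$, and $kG$ is the integer point $(ka_1,\dots,ka_N)$. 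This already identifies the extreme case $G=G_{max}$, where $p_G(k)=K_{k\lambdavec/k\muvec,k\nuvec}$ is polynomial by \cite{Kirillov88thebethe,Rassart2004}; equivalently, $P$ has period-one Ehrhart quasi-polynomial.

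Next I would unfold the lexicographic constraint into a disjoint decomposition. For each coordinate $t$ let $A_t(k)$ be the set of $G'\in kP\cap\Z^N$ whose first $t-1$ coordinates equal $ka_1,\dots,ka_{t-1}$ and whose $t$-th coordinate is at most $ka_t-1$. Together with the single point $G'=kG$, these sets partition $\{G'\lexleq kG\}$, so
\[
 p_G(k)=1+\sum_{t=1}^{N}|A_t(k)|.
\]
Setting $R_t=\{x\in P:\ x_s=a_s\ (s<t),\ x_t\le a_t\}$ and $\tilde R_t=R_t\cap\{x_t=a_t\}$, one checks from the definition of dilation that $|A_t(k)|=|kR_t\cap\Z^N|-|k\tilde R_t\cap\Z^N|$. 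Because every slicing hyperplane passes through the integer vector $\vvec_G$, the sets $R_t$ and $\tilde R_t$ are rational polytopes, so by Ehrhart's theorem each of the two counts is a quasi-polynomial in $k$. Hence $p_G(k)$ is a quasi-polynomial, and the entire content of the conjecture is the \emph{period collapse} from quasi-polynomial to polynomial.

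To attack period collapse I would try to reduce the truncated slice counts to genuine stretched skew Kostka numbers. Decompose $A_t(k)$ according to the row $i$ and column $j$ at which the first strict decrease occurs: all rows before row $i$ and the entries of row $i$ before column $j$ are frozen at $k$ times their values in $G$, while the entries from $(i,j)$ onward together with the later rows are free subject to the interlacing constraints and the single cut $x^i_j\le ka^i_j-1$. Ignoring that cut, the completion count is a lattice-point count over a polytope of the same ``GT type'' with integral boundary data, for which the polynomiality of \cite{Kirillov88thebethe,Rassart2004} should apply; the cut itself I would absorb by writing $|A_t(k)|$ as the difference of two such completion counts, one with the natural interlacing bound and one along the hyperplane $x^i_j=ka^i_j$.

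I expect the main obstacle to be precisely the interaction between the \emph{artificial} cut $x_t\le ka_t-1$ (and the fixing of a lex-initial segment) and the interlacing inequalities: a coordinate-hyperplane slice, or a truncation, of a polytope with period-one Ehrhart need not itself have period-one Ehrhart. Thus the crux is to show that these sliced and truncated fibers have integral Ehrhart quasi-polynomials, most plausibly by proving that fixing an initial segment to integer values leaves a polytope whose defining interlacing system still forces period-one behaviour. This is where the genuine work lies, and it is also where a counterexample would surface should the conjecture in fact fail.
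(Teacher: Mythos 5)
This statement is a \emph{conjecture} in the paper, not a theorem: the paper offers no proof, only the observation (which you also make) that $p_G(k)$ is a quasi-polynomial because it counts lattice points in dilates of a rational polytope, and that the whole difficulty is the period collapse. Your proposal reproduces exactly that observation and then, by your own admission in the final paragraph, stops at the point where the actual work begins. So there is a genuine gap, and you have correctly located it yourself: nothing in the proposal establishes that the sliced and truncated polytopes $R_t$ and $\tilde R_t$ have period-one Ehrhart quasi-polynomials.

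Concretely, the step that fails is the attempted reduction of the completion counts to ``genuine stretched skew Kostka numbers.'' Fixing a lex-initial segment of the coordinates of a GT-pattern to integer values does not leave a polytope of GT type: the remaining free entries are constrained by interlacing inequalities against \emph{pinned interior} entries, not just against a top row, a bottom row, and row sums, so the polynomiality results of Kirillov and Rassart do not apply to the fibers. Moreover the hyperplane sections $\tilde R_t$ are generally non-integral rational polytopes even when $P$ itself has integral vertices, and a slice or truncation of a polytope with polynomial Ehrhart function need not have one --- this is exactly the phenomenon the paper alludes to when it says that rational polytopes with polynomial Ehrhart functions are rare and that the conjecture, if true, would supply a new family of them. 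Your framework is a reasonable way to organize an attack, and it is consistent with how the paper motivates the conjecture, but as it stands it proves only quasi-polynomiality, which was already known.
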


The function $k \mapsto K_{k\lambdavec/k\muvec,k\nuvec}$ can be interpreted
as the Ehrhart function associated with certain \emph{rational} polytopes.
Not much except the degrees (in the non-skew case) of these polynomials are known, see \cite{Mcallister2008}.
It is immediate from Ehrhart theory that the functions $p_G(k)$ in Conjecture \ref{conj:polynomiality}
are quasi-polynomial, since $p_G(k)$ can also be seen as counting lattice points inside some
rational polytope.
However, the study of non-integral, rational polytopes having polynomial Erhart functions is
still very much an open research topic, see \cite{McAllister2005}, and few examples of such polytopes
are known.
Thus, Conjecture \ref{conj:polynomiality} give a new family of such polytopes.


\medskip 

In \cite{King04stretched}, it is conjectured that the polynomial $k \mapsto K_{k\lambdavec,k\nuvec}$ 
has only non-negative coefficients. The same seem to hold for the $p_G(k)$:
\begin{conjecture}\label{conj:nonnegativity}
For any fixed $G \in GT_{\lambdavec/\muvec,\nuvec}$ the function $p_G(k)$ is a polynomial in $k$,
with non-negative coefficients.
\end{conjecture}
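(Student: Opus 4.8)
The plan is to treat $p_G(k)$ as an Ehrhart-type count and resolve it by a lexicographic digit-counting argument, reducing the conjecture to a manifestly positive combinatorial model. First I would realise $\GT_{k\lambdavec/k\muvec,k\nuvec}$ as the set of lattice points of the $k$-th dilate of the rational Gelfand--Tsetlin polytope determined by $\lambdavec,\muvec,\nuvec$, so that $p_G(k)$ counts the lattice points in a lexicographic down-set; by Ehrhart theory this is \emph{a priori} a quasi-polynomial, as already noted in the text. Writing $\vvec_{kG}=(kg_1,kg_2,\dots)$, whose entries are all multiples of $k$ since $G$ is integral, I would stratify $\{G' : G'\lexleq kG\}$ by the position $\ell$ of the first coordinate in which $\vvec_{G'}$ is strictly smaller than $\vvec_{kG}$, together with the single pattern $kG$ itself. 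On the $\ell$-th stratum the first $\ell-1$ coordinates are frozen to multiples of $k$ and the $\ell$-th satisfies $x_\ell \le kg_\ell-1$. To avoid the off-lattice bound $kg_\ell-1$, I would split the strict inequality into a difference of weak ones, obtaining a signed expression
\[
 p_G(k) = 1 + \sum_{\ell} \bigl( A_\ell(k) - B_\ell(k) \bigr),
\]
where $A_\ell$ counts completions with the prefix frozen and $x_\ell\le kg_\ell$, and $B_\ell$ counts completions with the first $\ell$ entries frozen; crucially all prescribed values are multiples of $k$.

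For the polynomiality half (equivalently Conjecture \ref{conj:polynomiality}) I would observe that freezing a prefix of a GT-pattern to multiples of $k$ leaves, as admissible completions, the lattice points of the $k$-th dilate of a rational Gelfand--Tsetlin-type polytope all of whose markings are integral. When the frozen prefix consists of whole rows, the completion count factors as a product of stretched skew Kostka numbers, polynomial in $k$ by \cite{Rassart2004,Kirillov88thebethe}; the partial-row case I would handle through the same flagged polytope, arguing that its Ehrhart quasi-polynomial has period one by inheriting the period-one property already known for the full stretched skew Kostka number. A finite signed sum of polynomials is again a polynomial, which would settle this half.

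The crux is non-negativity of the coefficients, and here polynomiality is of no help: the stratification above is alternating, so the cancellation between the $A_\ell$ and $B_\ell$ must be resolved by a genuinely positive model. The natural source of positivity is the snake and tile decomposition developed above: within each stratum the non-boundary snakes of a completing pattern can be inflated independently through ranges whose lengths grow linearly in $k$. I would therefore aim to partition the down-set into cells each counted by a product $\prod_i \binom{k+a_i}{b_i}$ with $a_i \ge b_i-1$, since every such product expands with non-negative coefficients in $k$, and a cell decomposition of this shape would yield the conjecture at once.

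The main obstacle is precisely the construction of such a positive cell decomposition, uniformly across all lexicographic strata. This is essentially the open King--Tollu--Toumazet positivity conjecture restated for these partial counts, and the telescoping identity does not respect positivity; I therefore expect that the snake-cycle toolkit of Proposition \ref{prop:cycleaction} and Lemma \ref{lem:snakemoves} must be supplemented by a new, explicitly positive straightening of arbitrary patterns onto the lex-maximal pattern $kG_{\mathrm{max}}$ guaranteed by Corollary \ref{cor:largestismultiple}. Finding that positive straightening, rather than the polynomiality bookkeeping, is where I expect the real difficulty to lie.
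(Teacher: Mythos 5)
There is a fundamental mismatch here: the statement you are trying to prove is stated in the paper as a \emph{conjecture}, not a theorem. The paper offers no proof of it — only the remark that it is supported by computer experiments on some 20000 parameter choices — so there is nothing to compare your argument against except the question of whether your proposal actually closes the problem. It does not, and you essentially say so yourself. The entire content of the non-negativity claim is deferred to the construction of a ``positive cell decomposition'' of the lexicographic down-set into cells counted by products $\prod_i \binom{k+a_i}{b_i}$, which you acknowledge you cannot supply and which you correctly identify as being of the same order of difficulty as the King--Tollu--Toumazet positivity conjecture itself. A proof that ends with ``the main obstacle is precisely the construction of such a decomposition'' is a research program, not a proof; the signed telescoping $p_G(k)=1+\sum_\ell(A_\ell(k)-B_\ell(k))$ is a correct counting identity but, as you note, it destroys positivity rather than establishing it.

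The polynomiality half of your argument also has a gap you pass over too quickly. You claim the flagged polytope obtained by freezing a prefix of coordinates to multiples of $k$ has an Ehrhart quasi-polynomial of period one ``by inheriting'' this property from the stretched skew Kostka numbers. Period-one-ness of the Ehrhart function of the full Gelfand--Tsetlin polytope does not transfer to rational sub-polytopes cut out by lexicographic conditions; indeed the paper explicitly flags this as Conjecture \ref{conj:polynomiality} and points out that rational polytopes with polynomial Ehrhart functions are rare and poorly understood. Your product factorization for whole-row prefixes is plausible, but the partial-row case is exactly where the difficulty lies, and asserting inheritance there is circular. In short: both halves of the proposal reduce the conjecture to statements that are themselves open, so no part of Conjecture \ref{conj:nonnegativity} is actually established.
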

This is supported by extensive computer experiments, where over 20000 choices
of (small) parameters of $\lambdavec/\muvec$, $\nuvec$ have been verified to support this conjecture.

\begin{remark}
Note that the polynomial $k \mapsto K_{k\lambdavec/k\muvec,k\nuvec}$
does not depend on the order of the entries in $\nuvec$ (it does not need to be a partition),
but the functions obtained as $p_G(k)$ \emph{do} depend 
on the order of the entries in $\nuvec$ in general.
\end{remark}

\medskip
Finally, it is worth mentioning a related conjecture from \cite{King04stretched},
stating that for each natural number $m$, 
there is only a finite number of polynomials 
of the form $p(k)=K_{k\lambdavec,k\nuvec}$ such that $p(1)=m$.
The cases $m=0$ and $m=1$ are established by the Saturation Theorem, and K-Fultons Theorem
respectively. 


\bibliographystyle{amsplain}
\bibliography{bibliography}

\end{document}